\documentclass[12pt,a4paper,leqno]{amsart}
\usepackage[utf8]{inputenc}
\usepackage[T1]{fontenc}
\usepackage[UKenglish]{babel}
\usepackage[a4paper,margin=1in]{geometry}
\usepackage[german=guillemets]{csquotes} 

\usepackage{graphicx} 
\usepackage{url}

\usepackage{natbib}
\setcitestyle{numbers,square}

\usepackage{amsmath,amsfonts,amssymb,amsthm}
\usepackage{eucal,mathrsfs,dsfont}
\usepackage[fleqn,tbtags]{mathtools}

\theoremstyle{plain}
\newtheorem{theorem}{Theorem}
\newtheorem{lemma}[theorem]{Lemma}

\newtheorem{corollary}[theorem]{Corollary}

\theoremstyle{definition}
\newtheorem{remark}[theorem]{Remark}
\newtheorem{definition}[theorem]{Definition}
\newtheorem{example}[theorem]{Example}

\numberwithin{theorem}{section}
\numberwithin{equation}{section}

\renewcommand{\leq}{\leqslant}

\renewcommand{\geq}{\geqslant}
\renewcommand{\ge}{\geqslant}

\DeclareMathOperator*{\supp}{supp}  
\DeclareMathOperator*{\sgn}{sgn}  
\DeclareMathOperator*{\spann}{span}  
\DeclareMathOperator*{\Leb}{Leb}  
\DeclareMathOperator*{\law}{law}  
\DeclareMathOperator*{\Ree}{Re}  
\DeclareMathOperator*{\Imm}{Im}

\newcommand{\nat}{\mathds{N}}
\newcommand{\integer}{\mathds{Z}}

\newcommand{\real}{\mathds{R}}
\newcommand{\rn}{{\mathds{R}^n}}
\newcommand{\complex}{\mathds{C}}

\newcommand{\Ee}{\mathds{E}}
\newcommand{\Pp}{\mathds{P}}

\newcommand{\Acal}{\mathcal{A}}

\newcommand{\Dcal}{\mathcal{D}}
\newcommand{\Fcal}{\mathcal{F}}
\newcommand{\Lcal}{\mathcal{L}}
\newcommand{\Mcal}{\mathcal{M}}
\newcommand{\Xcal}{\mathcal{X}}

\newcommand{\I}{\mathds{1}}

\begin{document}

\title[The Unique Continuation Principle for Nonlocal Operators]{\bfseries On the Unique Continuation Principle for a Class of Translation Invariant Nonlocal Operators}

\author[D.~Berger]{David Berger}
\address[D.~Berger]{
	TU Dresden, 
	Fakult\"at Mathematik, 
	Institut f\"ur Mathematische Stochastik, 
	01062 Dresden, Germany. 
	E-Mail: \textnormal{david.berger2@tu-dresden.de}}

\author[R.L.~Schilling]{Ren\'e L.\ Schilling}
\address[R.L.~Schilling]{
	TU Dresden, 
	Fakult\"at Mathematik, 
	Institut f\"ur Mathematische Stochastik, 
	01062 Dresden, Germany. 
	E-Mail: \textnormal{rene.schilling@tu-dresden.de}}

\subjclass[2020]{Primary: 35R11. Secondary: 60J35; 60G51.}


\keywords{L\'evy operator; fractional Laplacian; unique continuation.}

\thanks{Our research was supported by the Dresden--Leipzig \emph{ScaDS.AI} centre. We are grateful to Luz Roncal (Ikerbasque, Bilbao) for helpful discussions on this topic.}

\maketitle

\allowdisplaybreaks

\begin{abstract}
	The unique continuation property (UCP) for an operator $A$ says that, if $Au = 0 = u$ holds on an open set $G$, then one has $u=0$ everywhere. We establish necessary and sufficient conditions for the UCP for the class of L\'evy operators. We prove a connection between the UCP of the L\'evy operator and its resolvent. Our results are applied to obtain a new elementary proof of the UCP for the fractional Laplace operator, and for certain functions (Bernstein functions) of the discrete Laplace operator. 
\end{abstract}


Caffarelli and Silvestre observed in \cite{caf-sil} that the fractional Laplace operator $(-\Delta)^s u$, $0<s<1$, appears as a suitable limit of the Dirichlet-to-Neumann map of the following extension problem:
\begin{gather*}
	\mathrm{div}\left(y_{n+1}^{1-2s}\nabla U = 0\right)\quad\text{in $(0,\infty)\times\rn$ and}\quad U=u\quad\text{on $\rn$}.
\end{gather*}
Ever since, the  fractional Laplacian and related non-local operators have attracted considerable interest in analysis. Within probability theory, the fractional Laplacian and more general L\'evy operators (sometimes called \emph{Laplaciens generalis\'es}, see \cite{herz}) are well-known as infinitesimal generators of stochastic processes with independent and stationary increments (L\'evy processes) and convolution semigroups.

We are interested in the so-called \emph{unique continuation property} or \emph{principle} (UCP, for short). An operator $L$ acting on a class of functions defined on $\rn$ enjoys the UCP, if for some non-empty open set $G\subset\rn$
\begin{gather}\label{ucp}
	L u|_G = 0\;\; \&\;\; u|_G=0 \implies u=0 \quad\text{on $\rn$}.
\tag{\textsf{ucp}}
\end{gather} 
Obviously, if the UCP holds for some open ball $B_r(x)$, then it holds for all open sets $G$, which contain $B_r(x)$. If, in addition, $L$ is invariant under translations (as is the case here), the UCP holds for all open sets $G$ as soon as it holds for all open balls.

Building on earlier work on second-order elliptic partial differential equations and the Caffarelli--Silvestre representation of $(-\Delta)^s$, Fall and Felli \cite{fal-fel} established the UCP for the fractional Laplacian $(-\Delta)^s$, which may be perturbed by a (non-linear) zero-order term. As we will see, in its simplest form for $L=(-\Delta)^s$, this is equivalent to a result due to M.~Riesz~\cite[Ch.~VIII]{riesz}, see Remark~\ref{rs-03} below, as well as Ghosh \emph{et al.}~\cite[Rem.~4.2]{ghosh-et-al}. Following \cite{fal-fel}, variants of the UCP for $(-\Delta)^s$ were considered by many authors, for example by Rüland~\cite{ruland}, Ghosh and co-workers \cite{ghosh-et-al,ghosh-et-al-2} and, in a discrete setting, by Fern\'andez-Bertolin \emph{et al.} \cite{ber-et-al}. Let us point out that the paper \cite{fal-fel} and all subsequent papers allow for $Lu$ in \eqref{ucp} to be understood in a weak sense, while the potential-theoretic approach of Riesz requires a pointwise interpretation of $Lu$, hence more regularity of the function $u$. 

In this paper we obtain necessary and sufficient conditions, which guarantee that a L\'evy operator, and in particular the fractional Laplacian, satisfies the UCP. For this, it is more convenient to work with the classical representation of the fractional Laplacian as a principal-value integral
\begin{gather*}
	(-\Delta)^s u(x) = \lim_{\epsilon \to 0} c_s\int_{|y|>\epsilon} \left(u(x+y)-u(x)\right) |y|^{-n-2s}\,dy,\quad x\in\rn,
\end{gather*}
($c_s = s4^s\pi^{-n/2}\Gamma\left(s+\frac n2\right)/\Gamma(1-s)$, $s\in (0,1)$) for which there is an analogue for L\'evy operators, see \eqref{nlo-e04} below. In particular, we obtain a new elementary proof of the UCP for $L=(-\Delta)^s$, which differs from \cite{riesz} and \cite{fal-fel,ghosh-et-al}, see Corollary~\ref{ucpl-21}.

\section{A class of nonlocal operators}\label{nlo}

Our standard references for the material in this section are the monographs \cite{BSW} and \cite{jacob-i}.
\begin{definition}\label{nlo-03}
	A \emph{L\'evy operator} is an integro-differential operator $A$ defined on the test functions $C_c^\infty(\rn)$ by
	\begin{gather}\label{nlo-e04}\begin{aligned}
		Au(x)
		&:= \ell\nabla u(x) + \frac 12\textrm{div}\left(Q\nabla u(x)\right)\\
		&\qquad\mbox{}+ \int_{y\neq 0} \left(u(x+y)-u(x)-y\nabla u(x)\I_{(0,1)}(|y|)\right)\nu(dy),
	\end{aligned}\end{gather}
	where the \emph{characteristic triplet} $(\ell,Q,\nu)$ comprises $\ell\in\rn$, $Q\in\real^{n\times n}$ a positive semidefinite matrix, and the measure $\nu$ satisfying $\int_{y\neq 0} \min\left\{1,|y|^2\right\}\nu(dy)<\infty$.
\end{definition}
The fractional Laplacian $(-\Delta)^s$ is a L\'evy operator with the triplet $(0,0,c_s|y|^{-n- 2s})$. It is not hard to see that the formula \eqref{nlo-e04} extends to twice differentiable bounded functions $u\in C_b^2(\rn)$ or $u\in W^2_p(\rn)$, $p\in [1,\infty)$, see e.g.\ \cite{sch-ieot}. 

With a few elementary calculations we can represent $A$ as a pseudo-differential operator 
\begin{gather}\label{nlo-e06}
	Au(x) = -\psi(D)u(x) = \Fcal^{-1}\left(-\psi \Fcal u\right)(x)
	\qquad\text{($\Fcal$ denotes the Fourier transform)}
\end{gather}
with \emph{negative definite symbol} $\psi:\rn\to\complex$
\begin{gather}\label{nlo-e08}
	\psi(\xi)
	= -i\ell\xi + \frac 12Q\xi + \int_{y\neq 0}\left(1-e^{i\xi y}+i\xi y\I_{(0,1)}(|y|)\right)\nu(dy).
\end{gather}
The operator $(A,C_c^\infty(\rn))$ is closable in all spaces $L^p(\rn;dx)$, $1\leq p < \infty$, as well as in $C_\infty(\rn) := \left\{u\in C(\rn)\mid \lim_{|x|\to\infty} u(x)=0\right\}$ equipped with the sup-norm. We denote these closures as $(A,\Dcal(A))$ and specify in each case the underlying Banach space $\Xcal = L^p(\rn)$ or $\Xcal=C_\infty(\rn)$. In either case, $(A,\Dcal(A))$ is the infinitesimal generator of a strongly continuous contraction semigroup $(P_t)_{t\geq 0}$ of convolution operators on $\Xcal$
\begin{gather*}
	P_t u = \Fcal^{-1}\left(e^{-t\psi}\Fcal u\right),\;\; u\in C_c^\infty(\rn)
	\quad\text{or}\quad
	P_t u(x) = \int u(x+y)\,\mu_t(dy),\;\; u\in \Xcal;
\end{gather*}
the measures $(\mu_t)_{t\geq 0}$ are probability measures satisfying $\Fcal \mu_t(\xi) = e^{-t\psi(\xi)}$. 

From now on we assume, for simplicity and in order to stay compatible with the literature, that $\psi$ is real-valued\footnote{This assumption is needed only for the $L^p$-setting. With some effort one can relax this assuming the \emph{sector condition} $|\Imm \psi(\xi)|\leq c\Ree\psi(\xi)$ for some constant $c\geq 0$ and all $\xi\in\rn$.}. In the $L^p$-setting the scale of anisotropic $\psi$-Bessel potential spaces $H^{\psi,t}_p(\rn)$, $t\in\real$, plays for $-\psi(D)$ exactly the same role as the Sobolev scale $H^{t}_p(\rn)$, $t\in\real$, for $(-\Delta)^{s}$, see \cite[Chapter 3.10]{jacob-i} for $p=2$ and \cite[Section 2]{fjs-2} for $p\in [1,\infty)$. By definition, $u\in H^{\psi,t}_p(\rn)$ if, and only if, $u = (1-A)^{-t/2}f$ for some $f\in L^p(\rn)$ where $A$ is the closure of the operator $(-\psi(D),C_c^\infty(\rn))$ in $L^p(\rn)$. We can characterize the norm in $H^{\psi,t}_p(\rn)$ via
\begin{gather}\label{nlo-e10}
	\|u\mid H_p^{\psi,t}\| = \|\Fcal^{-1}[(1+\psi)^{t/2}\Fcal u] \mid L^p\|,\quad u\in C_c^\infty(\rn).
\end{gather}
If $\psi(\xi) \geq c |\xi|^\rho$ for some $\rho>0$ and $|\xi|\gg 1$, then the scales $H^{\psi,s}_p(\rn)$ and $H^{t}_p(\rn)$ are \enquote{interlaced}, see \cite[Theorem~3.1]{jac-sch-sinica}, i.e.\ we have the following continuous embeddings:
\begin{gather}\label{nlo-e12}
	H^{\psi,s}_p(\rn) \hookrightarrow H^t_p(\rn) \hookrightarrow H^{\psi,r}_p(\rn)
	\quad s > \frac{t+n}{\rho}\quad\text{and}\quad t > r+n.
\end{gather}
Finally, $C_\infty^k(\rn)$ denotes the functions which are $k$-times continuously differentiable and vanish, with all derivatives, at infinity.
\begin{lemma}\label{nlo-05}
	If the underlying Banach space $\Xcal$ is
	\begin{enumerate}\itemsep=5pt 
	\item[a)]  $\Xcal = C_\infty(\rn)$, then $C^\infty_\infty(\rn)\subset C_\infty^2(\rn)\subset\Dcal(A)$.
	\item[b)]  $\Xcal = L^p(\rn)$, $p\in [1,\infty)$, then $\bigcap_{t>0} H^t_p(\rn) \subset H^{\psi,2}_p(\rn) = \Dcal(A)$.
	\end{enumerate}
\end{lemma}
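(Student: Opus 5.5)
For part a) I will show directly that every $u\in C^2_\infty(\rn)$ lies in the domain of the $C_\infty$-closure of $(A,C_c^\infty(\rn))$; the inclusion $C^\infty_\infty\subset C^2_\infty$ is trivial. The first step is a uniform bound on the integral in \eqref{nlo-e04}. Split it at $|y|=1$. For $|y|<1$, Taylor's formula gives $|u(x+y)-u(x)-y\nabla u(x)|\leq \frac12|y|^2\|\nabla^2u\|_\infty$, so this part is bounded by $\frac12\|\nabla^2 u\|_\infty\int_{0<|y|<1}|y|^2\,\nu(dy)$. For $|y|\geq1$ the integrand is bounded by $2\|u\|_\infty$, so this part is at most $2\|u\|_\infty\,\nu\{|y|\geq1\}$. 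Together with the local terms this yields
\begin{gather*}
	\|Au\|_\infty\leq c_\nu\,\|u\|_{(2)}, \qquad \|u\|_{(2)}:=\sum_{|\alpha|\leq2}\|\partial^\alpha u\|_\infty,
\end{gather*}
with $c_\nu$ depending only on the triplet $(\ell,Q,\nu)$. A dominated convergence argument, with the same majorants, shows that $Au$ is continuous and vanishes at infinity: each integrand tends to $0$ as $|x|\to\infty$ because $u$ and its derivatives vanish at infinity. Hence $A$ maps $C^2_\infty(\rn)$ into $C_\infty(\rn)$.

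Next I approximate. Given $u\in C^2_\infty(\rn)$, mollify and cut off: $u_k:=\chi_k\cdot(u*\phi_k)$, where $\phi_k$ is a standard mollifier and $\chi_k=\chi(\cdot/k)$ is a smooth cutoff with $\chi=1$ on $B_1(0)$. Then $u_k\in C_c^\infty(\rn)$ and $\|u_k-u\|_{(2)}\to0$. Here uniform continuity of $u$ and its derivatives up to order two handles the mollification, while the decay at infinity together with $\|\partial^\beta\chi_k\|_\infty=O(k^{-|\beta|})$ handles the cutoff. The estimate above then gives $u_k\to u$ and $Au_k\to Au$ uniformly, so $u$ belongs to the domain of the closure. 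The only genuinely delicate point is this approximation in the $C^2$-norm, and specifically the cutoff terms.

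For part b), fix $p\in[1,\infty)$. The identity $H^{\psi,2}_p=\Dcal(A)$ holds by definition: $H^{\psi,2}_p=(1-A)^{-1}L^p$, which is the range of the resolvent of the generator, and that range is exactly $\Dcal(A)$. For the inclusion I would invoke \eqref{nlo-e12} with $r=2$ and any $t>2+n$, which gives $H^t_p\hookrightarrow H^{\psi,2}_p$. This settles the claim whenever \eqref{nlo-e12} applies.

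That embedding, however, was stated under the growth assumption $\psi\geq c|\xi|^\rho$, whereas only its second half is needed here. If the cited result does not cover this half unconditionally, I will argue directly. Since $\psi(\xi)\leq c(1+|\xi|^2)$ for every negative definite symbol, the multiplier $(1+\psi)(1+|\xi|^2)^{-t/2}$ is smooth enough for large $t$ to be an $L^p$-Fourier multiplier. Indeed, for $t>2+n$ its inverse Fourier transform is integrable, which one checks using derivative bounds on $\psi$ obtained from the $2k$-th moment conditions on the small jumps. If smoothness of $\psi$ causes trouble, a cleaner route is to note that $\bigcap_t H^t_p\subset W^2_p$ and that $A$ extends boundedly from $W^2_p$ to $L^p$. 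The latter follows by the same Taylor splitting as in a), now using Minkowski's integral inequality. One then approximates in $W^2_p$ by test functions, which shows $W^2_p\subset\Dcal(A)$. I would take this second route, since it avoids any smoothness of $\psi$.
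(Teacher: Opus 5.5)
Your proof is correct. Part a) is the paper's argument. The paper compresses it to the bound $\|Au\|_\infty\le c\sum_{|\alpha|\le 2}\|\partial^\alpha u\|_\infty$ plus closedness. You supply the step that makes the closedness argument work, namely that $C_c^\infty(\rn)$ is dense in $C^2_\infty(\rn)$ for the $C^2$-norm (mollify, then cut off).

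In b) you take a different route, in two places.

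\emph{The equality.} The paper takes $H^{\psi,2}_p(\rn)=\Dcal(A)$ from the literature. You observe that, with the definition actually stated in the paper ($u=(1-A)^{-t/2}f$, $f\in L^p$), the case $t=2$ is just the range of the resolvent $(1-A)^{-1}$, i.e.\ $\Dcal(A)$. The cited theorem is only needed if $H^{\psi,t}_p$ is defined through the multiplier norm \eqref{nlo-e10}.

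\emph{The inclusion.} The paper uses the second embedding in \eqref{nlo-e12}. You rightly note that \eqref{nlo-e12} was stated under $\psi\ge c|\xi|^\rho$, which b) does not assume. In fact that embedding only uses the automatic bound $\psi(\xi)\le c(1+|\xi|^2)$, so the paper is fine in substance. Your chain $\bigcap_t H^t_p\subset W^2_p\subset\Dcal(A)$ rests on Taylor's formula plus Minkowski's integral inequality, giving $\|Au\|_p\le c\|u\|_{W^2_p}$, together with density of test functions in $W^2_p$. It is self-contained and uses nothing about $\psi$ beyond the triplet. It also gives the stronger statement $W^2_p(\rn)\subset\Dcal(A)$, the $L^p$-analogue of part a), which the paper only mentions in passing. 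The price is a few lines instead of a citation.

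Two small points. First, for $p=1$ you cannot use $H^2_1=W^2_1$, since it is false. You need $t>2$ and the fact that $\partial^\alpha G_t\in L^1$ for $|\alpha|<t$, where $G_t$ is the Bessel kernel. Second, in the multiplier route you discard, the obstruction to smoothness of $\psi$ comes from the large jumps, not the small ones. The part $\psi_1$ is always smooth, and $\psi-\psi_1$ is the Fourier transform of a finite measure, so that route needs no moment conditions at all.
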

\begin{proof}
	a) Since $C_c^\infty(\rn)\subset \Dcal(A)$ and \eqref{nlo-e04} implies $\|Au\|_\infty \leq c \sum_{|\alpha|\leq 2}\|\partial^\alpha u\|_\infty$, it follows from the closedness of $(A,\Dcal(A))$ that $C_\infty^2(\rn)\subset \Dcal(A)$. The inclusion $C_\infty^\infty(\rn)\subset C_\infty^2(\rn)$ is trivial.

\medskip	
	b)  The equality $H^{\psi,2}_p(\rn) = \Dcal(A)$ is from \cite[Thm.~2.1.15]{fjs-2}, while $\bigcap_{t>0} H^{t}_p(\rn)\subset\Dcal(A)$ follows from the second embedding in \eqref{nlo-e12}.
\end{proof}

The measures $(\mu_t)_{t\geq 0}$ are the transition probabilities of a \emph{L\'evy process}, i.e.\ a stochastic process $(X_t)_{t\geq 0}$ with values in $\rn$, right-continuous trajectories, and stationary and independent increments,
\begin{gather*}
	\mu_t(B) = \Pp(X_t \in B),\quad \text{for all Borel sets\ \ } B\subset\rn.
\end{gather*}
Using methods from probabilistic potential theory, it is possible to obtain stochastic representations of the solution to the Dirichlet problem
\begin{gather}\label{nlo-e14}
	\left[-\psi(D)u\right]\big|_G = 0
	\quad\text{and}\quad
	u\big|_{G^c} = f\big|_{G^c},
\end{gather}
which is, in view of the non-locality of $\psi(D)$, an exterior-domain problem rather than a boundary value problem. In probabilistic notation we have
\begin{gather}\label{nlo-e16}
	u(x) = \Ee^x\left[f\left(X_{\tau_G}\right)\right] = \int_{G^c} f(y) \,\Pp^x\left(X_{\tau_G}\in dy\right),
	\quad \tau_G := \inf\left\{t>0\mid X_t \notin G\right\},
\end{gather}
i.e., $\tau_G$ is the first time a process exits the set $G$ and the probability distribution of $X_{\tau_G}$ is the harmonic measure. We refer to \cite{hoh-jac96} for a detailed discussion on the properties of the stochastic solution.

For the present paper the following intuitive understanding of \eqref{nlo-e16} is useful: By definition, $X_{\tau_G}\in G^c$ and it is known that the L\'evy process leaves the domain $G$ almost surely by a jump $\Delta X_{\tau_G}$ (we write $\Delta X_t := X_t - X_{t-}$ for a jump discontinuity of the trajectory). It is known that  the support of the L\'evy measure $\nu$ appearing in \eqref{nlo-e04} describes the shape and size of $\Delta X_t$, and the mass distribution of the L\'evy measure governs the frequency of $\Delta X_t$. This means that the support of $\nu$ influences the support of the harmonic measure $\law\left(X_{\tau_G}\right)$.

If we compare \eqref{ucp} with \eqref{nlo-e14} and \eqref{nlo-e16}, we would expect that the UCP fails if $X_{\tau_G}$ cannot reach every open set in $G^c$ with strictly positive probability (this is indeed true, as we will see in Example~\ref{ucpl-11}), but the converse fails spectacularly: even if $\nu$ has full support and $X_{\tau_G}$ hits every open set in $G^c$ with strictly positive probability it may happen that the UCP fails, see Examples~\ref{ucpl-13}--\ref{ucpl-17}. 

\section{The unique continuation principle for L\'evy operators}\label{ucpl}

Let $A$ be a L\'evy operator of the form \eqref{nlo-e04}, with characteristic triplet $(\ell,Q,\nu)$, L\'evy process $(X_t)_{t\geq 0}$ and characteristic exponent $\Ee e^{i\xi X_t}=\exp(-t\psi(\xi))$, where $\psi$ is given by \eqref{nlo-e08}. We write $(A,\Dcal(A))$ for the closure of the operator $(A,C_c^\infty(\rn))$. 
\begin{definition}\label{ucpl-03}
	We say that a L\'evy operator $(A,\Dcal(A))$ satisfies the unique continuation principle, if the condition \eqref{ucp} holds for some non-empty open set $G\subset\rn$ and every $u\in D(A)$.
\end{definition}
This property was already discussed for the fractional Laplacian, which is the generator of a stable L\'evy process, but here we will discuss it in a wider context. We need a simple lemma from functional analysis to present our first result. 

Recall the definition of annihilators for Banach spaces in duality. Let $\Xcal$ be a Banach space and $\Xcal^*$ its topological dual. Let $M$ be a subspace of $\Xcal$ and $N$ a subspace of $\Xcal^*$. We denote by
\begin{align*}
	M^{\perp}
	&:=\left\{x^*\in \Xcal^*\mid \forall x\in M\::\: \langle x^*,x\rangle=0\right\},\\
	\prescript{\perp}{}{N}
	&:=\left\{x\in \Xcal\mid\forall x^*\in N\::\:\langle x^*,x\rangle=0\right\},
\end{align*}
the \emph{annihilators} of $M$ and $N$, see e.g.\ \cite[Chapter 4]{RudinFunc}. Let us also record a standard consequence of the Hahn-Banach theorem.
\begin{lemma}\label{ucpl-05}
	Let $\Xcal$ be a Banach space. A subspace $\Lcal\subset \Xcal^*$ is dense in $\Xcal^*$ in the weak-$\ast$-topology if, and only if, $\prescript{\perp}{}{\Lcal}=\{0\}$.
\end{lemma}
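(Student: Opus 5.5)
The plan is to prove the two implications separately. The key tool is the description of the weak-$\ast$ topology on $\Xcal^*$ as the locally convex topology $\sigma(\Xcal^*,\Xcal)$ generated by the evaluation functionals $x^*\mapsto\langle x^*,x\rangle$, $x\in\Xcal$. The second key fact is the standard identification of the continuous linear functionals on $(\Xcal^*,\sigma(\Xcal^*,\Xcal))$ with the elements of $\Xcal$, i.e.\ every weak-$\ast$ continuous linear functional $\Phi$ on $\Xcal^*$ has the form $\Phi(x^*)=\langle x^*,x\rangle$ for a unique $x\in\Xcal$ (cf.\ \cite[Chapter 3]{RudinFunc}).

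\emph{Necessity.} Assume $\Lcal$ is weak-$\ast$ dense and let $x\in\prescript{\perp}{}{\Lcal}$. The functional $x^*\mapsto\langle x^*,x\rangle$ is weak-$\ast$ continuous and vanishes on $\Lcal$, hence on its weak-$\ast$ closure, which is all of $\Xcal^*$. Thus $\langle x^*,x\rangle=0$ for every $x^*\in\Xcal^*$. By the Hahn--Banach theorem there is some $x^*$ with $\langle x^*,x\rangle=\|x\|$, so $x=0$.

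\emph{Sufficiency.} Assume $\prescript{\perp}{}{\Lcal}=\{0\}$ and suppose, for contradiction, that the weak-$\ast$ closure $\overline{\Lcal}^{w*}$ is a proper subspace of $\Xcal^*$. It is a closed subspace of the locally convex Hausdorff space $(\Xcal^*,\sigma(\Xcal^*,\Xcal))$. Pick $x_0^*\notin\overline{\Lcal}^{w*}$. The Hahn--Banach separation theorem in locally convex spaces yields a weak-$\ast$ continuous linear functional $\Phi$ with $\Phi|_{\overline{\Lcal}^{w*}}=0$ and $\Phi(x_0^*)\neq 0$. Here we use that a subspace on which a linear functional is bounded must be annihilated by it. By the identification above, $\Phi=\langle\cdot,x\rangle$ for some $x\in\Xcal$. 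Then $x\in\prescript{\perp}{}{\Lcal}$ but $x\neq0$ since $\langle x_0^*,x\rangle\neq0$, which is a contradiction.

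The only nonroutine ingredient is the identification of the weak-$\ast$ dual of $\Xcal^*$ with $\Xcal$. I would quote it from \cite{RudinFunc}. If a self-contained argument is wanted, it goes as follows. A weak-$\ast$ continuous $\Phi$ satisfies $|\Phi(x^*)|\le C\max_{i\le k}|\langle x^*,x_i\rangle|$ for finitely many $x_i$. Hence $\ker\Phi\supset\bigcap_i\ker\langle\cdot,x_i\rangle$, and a linear-algebra lemma then gives $\Phi=\sum_i c_i\langle\cdot,x_i\rangle=\langle\cdot,\sum_i c_ix_i\rangle$. Equivalently, the whole lemma is the bipolar statement $(\prescript{\perp}{}{\Lcal})^{\perp}=\overline{\Lcal}^{w*}$, from which it follows immediately.
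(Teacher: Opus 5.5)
Your proof is correct and is the standard Hahn--Banach duality argument the paper has in mind. The paper gives no proof of its own: it records the lemma as a standard consequence of the Hahn--Banach theorem, with a pointer to Rudin, \emph{Functional Analysis}, Ch.~4, where it appears as the identity $(\prescript{\perp}{}{\Lcal})^{\perp}=\overline{\Lcal}^{w*}$, proved, just as you do, via separation in $(\Xcal^*,\sigma(\Xcal^*,\Xcal))$ and the identification of that space's dual with $\Xcal$.
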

Fix an open set $G\subset\rn$. A sequence of Radon measures $\mu_n$ satisfying $\sup_n \mu_n(G)<\infty$ converges vaguely to a (necessarily finite) Radon measure $\mu$ on $G$, if for all $\phi\in C_\infty(G)$ it holds that
\begin{align*}
	\lim_{n\to\infty}\int_G \phi(x)\mu_n(dx) = \int_G \phi(x)\mu(dx).
\end{align*}
This is exactly the weak-$\ast$-convergence for the dual pair $(C_\infty(G),C_\infty(G)^{\ast})$ (recall that $C_\infty(G) = \overline{C_c(G)}^{\|\bullet\|_\infty}$).

\begin{theorem}\label{ucpl-07}
Let $(A,\Dcal(A))$ be a L\'evy operator defined on $C_\infty(\rn)$ and denote by $(\ell,Q,\nu)$ its characteristic triplet. The operator $A$ satisfies the unique continuation principle if, and only if, for every $\epsilon>0$ the linear span of the set
\begin{gather*} 
	\Sigma(\nu,\epsilon):=\left\{\nu(\cdot+x)\big|_{\rn\setminus \overline{B_\epsilon(0)}} \;\middle|\; x\in B_\epsilon(0)\right\}
\end{gather*} 
is dense with respect to vague convergence in the space $\Mcal_b\left(\rn\setminus\overline{B_\epsilon(0)}\right)$ of bounded signed Radon measures on $\rn\setminus\overline{B_\epsilon(0)}$.
\end{theorem}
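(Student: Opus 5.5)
By translation invariance it suffices to test \eqref{ucp} on balls centred at the origin, and the key observation is how $A$ acts on functions that vanish on a ball. Let $u\in\Dcal(A)$ with $u|_{B_\epsilon(0)}=0$. Then for $x\in B_\epsilon(0)$ the local part of \eqref{nlo-e04} vanishes and
\begin{gather*}
	Au(x)=\int u(x+y)\,\nu(dy)=\int_{\rn\setminus\overline{B_\epsilon(0)}} u(z)\,\nu(dz-x)=\left\langle \nu(\cdot-x)\big|_{\rn\setminus\overline{B_\epsilon(0)}},u\right\rangle .
\end{gather*}
The precise sign convention ($\nu(\cdot+x)$ versus $\nu(\cdot-x)$) does not matter, since $x$ runs through the symmetric set $B_\epsilon(0)$.

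To justify this formula for general $u\in\Dcal(A)$ rather than only for $u\in C_c^\infty$, I would approximate $u$ in graph norm. The cleanest device is mollification, $u_\delta=u*\phi_\delta$ with $\operatorname{supp}\phi_\delta\subset B_\delta(0)$. Then $u_\delta\in C^2_\infty(\rn)\subset\Dcal(A)$ by Lemma~\ref{nlo-05}, $u_\delta\to u$ and $Au_\delta=(Au)*\phi_\delta\to Au$ uniformly, and $u_\delta$ vanishes on $B_{\epsilon-\delta}(0)$. For these $u_\delta$ the formula holds pointwise. Passing to the limit requires that the functionals $u\mapsto\int_{|z|>\epsilon'}u(z)\,\nu(dz-x)$ are continuous in the sup-norm. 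This is true because $\nu(\cdot-x)$ restricted to $\{|z|>\epsilon'\}$ is a finite measure whenever $|x|<\epsilon'$, since its support then stays away from $-x$.

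With the formula in hand, the condition ``$u=0$ and $Au=0$ on $B_\epsilon(0)$'' says exactly that $u|_{\rn\setminus\overline{B_\epsilon(0)}}$ lies in $\prescript{\perp}{}{\Sigma(\nu,\epsilon)}$. Here the pairing is between $C_\infty(\rn\setminus\overline{B_\epsilon(0)})$ and $\Mcal_b$, with the boundary sphere $\{|z|=\epsilon\}$ handled by continuity. By Lemma~\ref{ucpl-05}, $\operatorname{span}\Sigma$ is vague-dense if and only if this pre-annihilator is $\{0\}$. This reduces both directions of the theorem to the following two extension and restriction steps.

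For \emph{sufficiency}, take $u$ with $u=Au=0$ on $G\supset B_\epsilon(x_0)$ and translate so that $x_0=0$. Then $u$ restricted to the exterior lies in the pre-annihilator, hence vanishes, so $u\equiv0$.

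For \emph{necessity}, suppose $\operatorname{span}\Sigma(\nu,\epsilon)$ is not dense. Then there is a nonzero $g\in C_\infty(\rn\setminus\overline{B_\epsilon(0)})$ annihilating $\Sigma$. I need to produce $u\in\Dcal(A)$ vanishing on some ball with $Au=0$ there, and $u\neq0$. Setting $u=g$ outside the ball and $u=0$ inside gives a continuous function, but not necessarily one in $\Dcal(A)$; this regularity issue is the \textbf{main obstacle}. I would handle it by mollifying once more. Put $u=g*\phi_\delta$, which lies in $C^\infty_\infty\subset\Dcal(A)$, vanishes on $B_{\epsilon-\delta}(0)$, and is nonzero for small $\delta$. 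For $|x|<\epsilon-2\delta$,
\begin{gather*}
	Au(x)=\int\phi_\delta(w)\,\langle\nu(\cdot-x+w),g\rangle\,dw=0,
\end{gather*}
because $|x-w|<\epsilon$, so each $\nu(\cdot-x+w)$ restricted to the exterior of the ball belongs to $\Sigma(\nu,\epsilon)$. Fubini is justified by the finiteness noted above. Hence UCP fails on $B_{\epsilon-2\delta}(0)$. Since UCP on a ball implies UCP on all larger balls, failure for some arbitrarily small ball means that, by translation invariance, UCP fails for every $G$ small enough, and the failure is transported to all $G$ by the same scaling and translation argument for every $\epsilon$.
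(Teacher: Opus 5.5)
Your proof is correct and follows the paper's route. You identify $Au(x)$, $x\in B_\epsilon(0)$, with the pairing of $u$ against $\nu(\cdot-x)|_{\rn\setminus\overline{B_\epsilon(0)}}$, apply Lemma~\ref{ucpl-05}, and for the converse extend a nonzero annihilator by zero and mollify it. You are more careful than the paper in justifying the pairing formula on all of $\Dcal(A)$ (the paper instead replaces $u$ by a mollification on a smaller ball) and in tracking how the ball shrinks.

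Drop your final sentence, though. A general L\'evy measure has no scaling invariance, so that "transport" argument fails. It is also not needed: a single ball on which \eqref{ucp} fails already violates the UCP in the sense the paper's proof actually uses, namely validity of \eqref{ucp} on every ball.
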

\begin{proof}
Let $\phi\in\Dcal(A)$ be such that both $A\phi(x)=0$ and $\phi(x)=0$ for $x\in B_{\epsilon}(0)$ for some $\epsilon>0$. Without loss of generality we may assume that $\phi\in C^\infty(\rn)$. Otherwise, we pick an $\chi\in C^\infty_c(\rn)$ such that $\supp(\chi)\subset B_{\epsilon/2}(0)$. Clearly, $\chi\ast \phi \in C_\infty^\infty(\rn)\subset\Dcal(A)$. Moreover, $A(\chi\ast\phi) = \chi\ast A\phi=0$ in $B_{\epsilon/2}(0)$. Since $\epsilon>0$ is arbitrary, we can replace $\phi$ by $\chi\ast\phi$. 

For $\phi\in C^\infty(\rn)\cap\Dcal(A)$ and $x\in B_\epsilon(0)$ we have
\begin{align}\label{ucpl-e04}\begin{split}
	A\phi(x)
	&= \ell \nabla \phi(x) +\frac{1}{2}\textrm{div}\left( Q\nabla \phi(x)\right)\\
	&\qquad\mbox{}+\int_{y\neq 0} \left(\phi(x+y)-\phi(x)-y\nabla \phi(x)\I_{(0,1)}(|y|)\right)\nu(dy)\\
	&= \int_{y\neq 0} \phi(x+y)\,\nu(dy)
	= \int_{\rn\setminus \overline{B_{\epsilon}(0)}} \phi(y)\,\nu(dy-x).
\end{split}
\end{align}
This shows that $\phi\in\Acal$, where the set $\Acal$ is defined as
\begin{gather*}
	\Acal := 
	\left\{f\in C_\infty\left(\rn\setminus \overline{B_{\epsilon}(0)}\right) \;\middle|\; \forall x\in B_{\epsilon}(0)\::\: \int_{\rn\setminus \overline{B_{\epsilon}(0)}} f(y)\,\nu(dy-x) = 0\right\}
\end{gather*}
We claim that $\Acal=\{0\}$ if, and only if, $\phi\equiv 0$. 

Indeed, if $\Acal=\{0\}$ we get from $\Acal=\prescript{\perp}{}{\spann\left( \Sigma(\nu,\epsilon)\right)}$ and Lemma \ref{ucpl-05} that $\spann\left( \Sigma(\nu,\epsilon)\right)$ is dense, hence $\phi\equiv 0$. 

Conversely, assume that there exists some $\phi\in\prescript{\perp}{}{\spann\left( \Sigma(\nu,\epsilon)\right)}\setminus\{0\}$. We can extend $\phi$ continuously onto $\rn$ if we set $\phi(x):=0$ for all $x\in\overline{B_\epsilon(0)}$. Using a mollifier argument as at the beginning of the proof we can assume that $\phi\in C_\infty^\infty(\rn)$, hence $\phi\in \Dcal(A)$, and we get from \eqref{ucpl-e04}
\begin{align*}
	A\phi(x) = 
	\int_{\rn\setminus\{0\}} \phi(x+y)\,\nu(dy)
	= \int_{\rn\setminus \overline{B_{\epsilon}(0)}} \phi(y)\nu(dy-x)
	= 0
\end{align*}
for all $x\in B_\epsilon(0)$. Therefore, $A$ cannot satisfy the unique continuation principle.
\end{proof}

Note that the differential operator part of $A$ has no influence on the validity of the UCP when there is a non-trivial non-local part, i.e.\ a non-trivial $\nu$.

The following lemma shows that the validity of the UCP in $C_\infty(\rn)$ for a given L\'evy operator $A$ guarantees its validity in an $L^p$-context, too. Further, the lemma shows that in $L^p$ the implication \eqref{ucp} may be applied for any $u\in H^{\psi,t}_p$ and any $t\in\real$ -- in this case $Au$ is understood in a weak sense. In order to keep things apart, we write for a moment $(A_p,\Dcal(A_p))$ and $(A_\infty,\Dcal(A_\infty))$ for the closure of $(A,C_c(\rn))$ in $L^p(\rn)$ and $C_\infty(\rn)$, respectively.
 
As the symbol $\psi$ is not necessarily smooth, it makes sometimes sense to look at the symbol 
\begin{gather*}
	\psi_1(\xi) := \psi(\xi) - \int_{|y|\geq 1} \left(1-e^{i\xi y}\right) \nu(dy), \quad\xi\in\rn.
\end{gather*}
The symbol $\psi_1$ is smooth and all derivatives are polynomially bounded, see \cite[Theorem 3.7.13]{jacob-i}. Furthermore, it holds that $H^{\psi,s}_p(\rn)=H^{\psi_1,s}_p(\rn)$, see \cite[Theorem 3.10.4]{jacob-i}. Assuming that $u\in H^{\psi,s}_p(\rn)\subset \mathcal{S}'(\rn)$, we know that there exists an $f\in L^p(\rn)$ such that $u=(1-A_1)^{-s/2}f$, where $A_1|_{C_c^\infty(\rn)}=-\psi_1(D)|_{C_c^\infty(\rn)}$. It is then easy to see that the convolution of $u$ and a function $\phi\in C^\infty_c(\mathbb{R}^n)$ is given by
\begin{align*}
	u\ast \phi(x)=f\ast ((1-A_1)^{-s/2})^*\phi(x).
\end{align*} 
We know that the Fourier transform of $((1-A_1)^{-s/2})^*\phi$ is given by $(1+\psi_1(-\xi))^{-s/2}\mathcal{F}\phi$, which implies that $((1-A_1)^{-s/2})^*\phi\in L^q(\rn)$ for all $q\ge 1$. Hence $f\ast ((1-A_1)^{-s/2})^*\phi(x)\in C^\infty_\infty(\rn)$ as $((1-A_1)^{-s/2})^*\phi \in L^{\frac{p}{p-1}}(\rn)$.
\begin{lemma}\label{ucpl-09}
	Let $(A,\Dcal(A))$ be a L\'evy operator with characteristic triplet $(\ell,Q,\nu)$ and characteristic exponent $\psi(\xi)$. If $(A_\infty,\Dcal(A_\infty))$ satisfies the UCP in the sense of Definition~\ref{ucpl-03}, then $(A_p,\Dcal(A_p))$, $p\in [1,\infty)$, has the following property: For some open set $G = B_{2\epsilon}(x_0)$ and for $u\in H^{\psi,t}_p(\rn)$ where $s\in\real$ is arbitrary it holds that
	\begin{gather}\label{ucpl-e06}
		(A_p u)|_G = 0\;\;\&\;\; u|_G = 0 \implies u = 0.
	\end{gather}	
\end{lemma}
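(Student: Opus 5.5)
The plan is to reduce to the $C_\infty$-setting by mollification, using the observation made just before the lemma. Let $u\in H^{\psi,t}_p(\rn)$ satisfy $(A_pu)|_G=0$ and $u|_G=0$ with $G=B_{2\epsilon}(x_0)$. Here $A_pu$ is understood weakly, i.e.\ as the distribution $-\psi(D)u$. By translation invariance we may assume $x_0=0$. We fix $\chi\in C_c^\infty(\rn)$ with $\supp\chi\subset B_{\epsilon}(0)$ and set $u_\chi:=u\ast\chi$. By the computation preceding the lemma, $u_\chi = f\ast((1-A_1)^{-t/2})^*\chi\in C_\infty^\infty(\rn)$, and by Lemma~\ref{nlo-05}\,a) it lies in $\Dcal(A_\infty)$.

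Next we check that $u_\chi$ satisfies the hypotheses of \eqref{ucp} on $B_\epsilon(0)$. Since $u$ vanishes on $B_{2\epsilon}(0)$ in the sense of distributions and $\supp\chi\subset B_\epsilon(0)$, we get $u_\chi=0$ on $B_\epsilon(0)$. For the operator, translation invariance gives the identity $A_\infty(u\ast\chi)=(A_pu)\ast\chi$, and this is the step that needs care. I would verify it on the Fourier side in $\mathcal S'$: both sides equal $\Fcal^{-1}(-\psi\,\Fcal u\,\Fcal\chi)$, where $\psi\Fcal u$ makes sense because we may replace $\psi$ by the smooth, polynomially bounded $\psi_1$ plus the bounded part $\int_{|y|\ge1}(1-e^{i\xi y})\nu(dy)$, which is a convolution with a finite measure. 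Moreover, the pointwise value $A_\infty u_\chi(x)$ from \eqref{nlo-e04} agrees with the distributional one, because $u_\chi\in C^\infty_\infty$. Since $A_pu$ vanishes on $B_{2\epsilon}(0)$, the convolution $(A_pu)\ast\chi$ vanishes on $B_\epsilon(0)$.

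The UCP for $A_\infty$ then yields $u\ast\chi=0$ on $\rn$. To apply it on the ball $B_\epsilon(0)$ we use that the UCP for one ball implies it for all balls; this follows from translation invariance and a scaling-free argument via Theorem~\ref{ucpl-07}, whose criterion is formulated for every $\epsilon>0$. Since $\chi$ was an arbitrary test function supported in $B_\epsilon(0)$, we take an approximate identity $\chi_k\to\delta_0$ and conclude that $u=\lim_k u\ast\chi_k=0$ in $\mathcal S'$, hence $u=0$.

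The main obstacle is the commutation $A(u\ast\chi)=(Au)\ast\chi$ for a genuinely distributional $u\in H^{\psi,t}_p$ with arbitrary (possibly negative) $t$, together with the fact that $\psi$ itself may not be smooth. I would handle this by splitting $\psi=\psi_1+\psi_2$ as above: $\psi_1(D)$ acts continuously on $\mathcal S'$ as a multiplier with polynomially bounded smooth symbol, and $\psi_2(D)$ acts as $u\mapsto \int_{|y|\ge1}(u-u(\cdot+y))\,\nu(dy)$, which commutes with convolution directly.
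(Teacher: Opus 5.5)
Your proof is correct and follows the paper's route. You mollify with a kernel supported in $B_\epsilon(0)$, use the remark preceding the lemma to get $u\ast\chi\in C^\infty_\infty(\rn)\subset\Dcal(A_\infty)$, observe that $u\ast\chi$ and $A(u\ast\chi)=(A_pu)\ast\chi$ vanish on the smaller ball, apply the UCP for $A_\infty$, and let the mollifier tend to $\delta_0$.

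Your Fourier-side justification of the commutation via $\psi=\psi_1+\psi_2$ is more careful than the paper's. The paper simply invokes $A_pu_\epsilon=A_\infty u_\epsilon$ for $u_\epsilon\in\Dcal(A_p)\cap\Dcal(A_\infty)$.

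You should drop the claim that the UCP on one ball yields it on all balls via Theorem~\ref{ucpl-07}. That theorem's proof only shows that the UCP on all balls is equivalent to density of $\spann\Sigma(\nu,\epsilon)$ for all $\epsilon$. It does not show that the UCP on a single ball forces density for every $\epsilon$, and L\'evy operators have no scaling to move between radii.

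You do not need that claim anyway. The UCP is meant for every non-empty open set. Alternatively, since the lemma is formulated for \emph{some} ball $G=B_{2\epsilon}(x_0)$, you may take $G$ to be the doubled ball on which $A_\infty$ has the UCP.
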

\begin{proof}
	Assume that for some $p\in [1,\infty)$ and $t\in\real$ the function $u\in H^{\psi,t}_p(\rn)$ satisfies $(A_p u)|_G = 0 = u|_G$ for some $G=B_{2\epsilon}(x_0)$. The Friedrichs mollifier is defined as $u_\epsilon := J_\epsilon u := \chi_\epsilon\ast u$ with $\chi_\epsilon(x) := \epsilon^{-n}\chi(\epsilon^{-1}x)$ and $\chi\in C_c(\infty)$, $\chi\geq 0$, $\supp\chi\subset \overline{B_1(0)}$, $\int \chi(x)\,dx = 1$. By the discussion before we have seen that $J_\epsilon u\in C^\infty_\infty(\rn)$.
	Moreover, since $\supp\chi_\epsilon\subset \overline{B_\epsilon(0)}$, the convolution structure of the operator $J_\epsilon$ shows that in the smaller ball $B_\epsilon(x_0)$
	\begin{gather*}
		u_\epsilon|_{B_\epsilon(x_0)} = (J_\epsilon u)|_{B_\epsilon(x_0)} = 0
		\quad\text{and}\quad
		(A_p u_\epsilon)|_{B_\epsilon(x_0)} = (J_\epsilon \underbracket[.6pt]{A_p u}_{\mathclap{=0\text{\ on\ } B_{2\epsilon}(x_0)}})|_{B_\epsilon(x_0)} = 0.
	\end{gather*}
	Since $u_\epsilon\in \Dcal(A_\infty)\cap\Dcal(A_p)$, we have $A_p u_\epsilon = A_\infty u_\epsilon$, and the UCP for $(A_\infty,\Dcal(A_\infty))$ shows hat $u_\epsilon = 0$. This shows that $u=0$ (Lebesgue) almost everywhere, and the claim follows.
\end{proof}

Lemma~\ref{ucpl-09} allows us to consider only $(A_\infty,\Dcal(A_\infty))$, which we denote again as $(A,\Dcal(A))$. The condition in Theorem \ref{ucpl-07} is a bit cumbersome to check. But it is very useful if one wants to construct examples for which the UCP fails.
\begin{example}\label{ucpl-11}
	Let $A$ be a L\'evy operator whose L\'evy measure $\nu$ does not have full support. Then $A$ does not satisfy the UCP.
	
	Since $\supp\nu\neq\rn\setminus\{0\}$, then there is some ball $B_\delta(x_0)\subset\rn\setminus\{0\}$ such that $\supp\nu \cap B_\delta(x_0)=\emptyset$; in particular, $\nu|_{B_\delta(x_0)} = 0$. We want to show that the span of $\Sigma(\nu,\epsilon)$ cannot be dense for $\epsilon := \frac 12\delta$. Since $\nu|_{B_\delta(x_0)}=0$ the shifted measures satisfy $\nu(\cdot+x')|_{B_\eta(x_0)}=\nu|_{B_\eta(x_0+x')}=0$, $|x'|<\frac 12\delta$ and $\eta < \frac 12\delta$, see Fig.~\ref{fig-1}. This means that the \enquote{hole} in the support is bequeathed to all shifts. In particular, $\spann(\Sigma(\nu,\delta))$ cannot be dense in the bounded signed Radon measures $\Mcal_b\left(\rn\setminus\overline{B_\delta(0)}\right)$.
	\begin{figure}[!h]\centering
		\includegraphics[width = .6\textwidth]{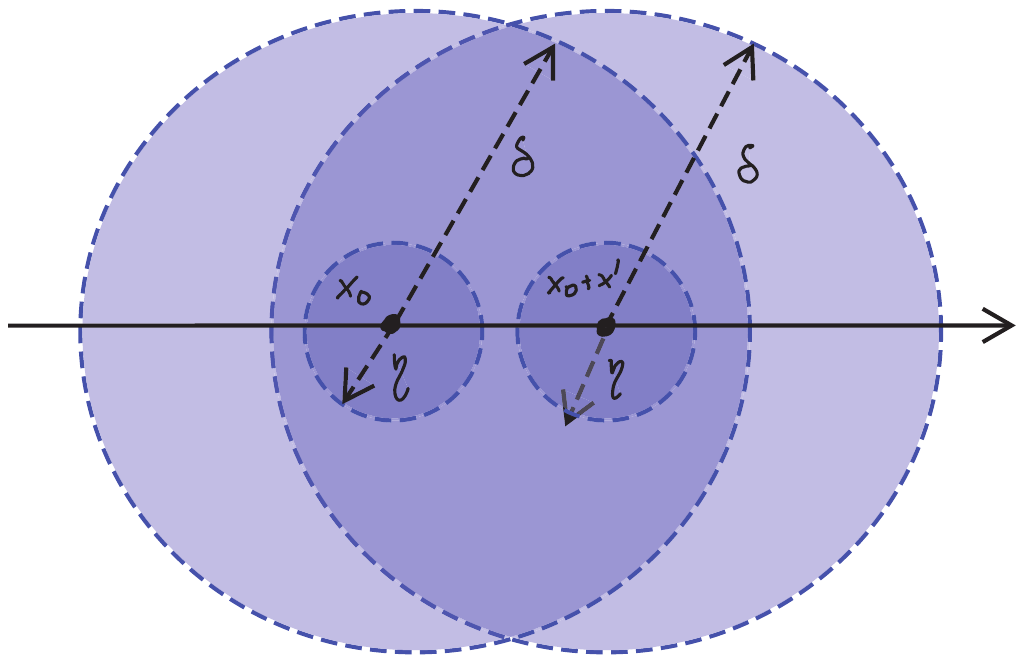}
		\caption{The intersection of $B_\delta(x_0)$ with the shifted ball $x'+B_\delta(x_0) = B_\delta(x_0+x')$, $|x'|<\frac 12\delta$, contains the balls $B_\eta(x_0)$ and $B_\eta(x_0+x')$ for any $\eta\in \left(0,\frac 12\delta\right)$. }\label{fig-1}
	\end{figure}
\end{example}

In particular, if we modify the L\'evy measure $\nu_s(dy) := c_s|y|^{-s/2-n}\,dy$ of the fractional Laplacian locally by \enquote{cutting a hole} into the support of $\nu_s$, the resulting operator will not enjoy the UCP. The \enquote{surgery} argument, which we used in Example~\ref{ucpl-11}, can be modified to other modifications of the L\'evy measure. The following example show that it is not sufficient to require full topological support of the L\'evy measure in order to get the UCP.
\begin{example}\label{ucpl-13}
	Let $A$ be a L\'evy operator whose L\'evy measure $\nu$ is such that $\nu|_{B_{\delta}(x_0)}=\Leb|_{B_{\delta}(x_0)}$ for some $x_0\in \rn\setminus\{0\}$ ($\Leb$ stands for Lebesgue measure). Then $A$ does not enjoy the UCP.
	
	Using the argument of Example~\ref{ucpl-13}, we see that the shifted L\'evy measures satisfy $\nu(\cdot+x')|_{B_{\eta}(x_0)}=\nu|_{B_{\eta}(x_0+x')}=\Leb|_{B_{\eta}(x_0+x')}$, hence $\spann(\Sigma(\nu,\delta))$ cannot be dense in $\Mcal_b\left(\rn\setminus\overline{B_\delta(0)}\right)$.
\end{example}

\begin{example}\label{ucpl-15}
	Let $A$ be a L\'evy operator whose L\'evy measure $\nu$ is such that  $\nu|_{B_{\delta}(x_0)}(dx) = e^{\beta x}\I_{B_{\delta}(x_0)}\,dx$ for some $\beta\in\rn$, and $x_0\in \rn\setminus\{0\}$, $\delta>0$. Then $A$ does not enjoy the UCP.
		
	If we take a measure $\mu\in \spann(\Sigma(\nu,\delta))$, then $\mu|_{B_{\eta}(x_0)}(dx) = Ce^{\beta x}\I_{B_{\eta}(x_0)}(x)\,dx$, where $C\in \real$ and $\eta = \frac 12\delta$. Thus, $\spann(\Sigma(\nu,\delta))$ cannot be dense in $\Mcal_b\left(\rn\setminus\overline{B_\delta(0)}\right)$.
\end{example}

\begin{example}\label{ucpl-17}
	Let $A$ be a L\'evy operator whose L\'evy measure $\nu$ is such that $\nu|_{B_{\delta}(x_0)}(dx) = p(x)\I_{B_{\delta}(x_0)}(x)\,dx$ for a polynomial $p(x)=\sum_{|\alpha|\leq m} c_\alpha x^\alpha$ (in multiindex notation), and some $x_0\in \rn\setminus\{0\}$, $\delta>0$. Then $A$ does not satisfy the UCP.
	
	This follows as in Example~\ref{ucpl-15}: all shifted measures $\nu(\cdot+x')$ have polynomial densities on the set $B_\eta(x_0)$. Since these are shiftes versions of $p(x)$, they have all the degree $\deg p = m$. Therefore, $\spann(\Sigma(\nu,\delta))$ cannot be dense in $\Mcal_b\left(\rn\setminus\overline{B_\delta(0)}\right)$.
\end{example}

We are now going to show how one can deduce the UCP for $(-\Delta)^s$ (or, equivalently, Riesz' theorem, see \eqref{riesz} at the beginning of Section~\ref{rs}) from Theorem~\ref{ucpl-07} in a rather elementary way. Our proof differs from the proofs of Riesz and Ghosh \emph{et al.}
\begin{corollary}\label{ucpl-21}
	The fractional Laplacian $\left((-\Delta)^{s}, \Dcal\left((-\Delta)^{s}\right)\right)$, $s\in (0,1)$, on $C_\infty(\rn)$ satisfies the UCP.
\end{corollary}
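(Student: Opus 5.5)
We will verify the criterion of Theorem~\ref{ucpl-07} for $\nu(dy)=c_s|y|^{-n-2s}\,dy$. By translation invariance it suffices to work with balls $B_\epsilon(0)$. By Lemma~\ref{ucpl-05} we must show that the pre-annihilator of $\spann\Sigma(\nu,\epsilon)$ is trivial. So let $f\in C_\infty\bigl(\rn\setminus\overline{B_\epsilon(0)}\bigr)$ satisfy
\begin{gather*}
	F(x):=\int_{|y|>\epsilon} f(y)\,|y-x|^{-n-2s}\,dy = 0\quad\text{for all } x\in B_\epsilon(0).
\end{gather*}
We want to conclude that $f\equiv 0$.

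The key observation is that $F$ is real analytic in $x\in B_\epsilon(0)$. We expand the kernel around $x=0$: for $|x|<|y|$ we have $|y-x|^{-n-2s}=\sum_\alpha \frac{x^\alpha}{\alpha!}\,\partial^\alpha_x|y-x|^{-n-2s}\big|_{x=0}$. Cauchy-type bounds $|\partial^\alpha |y|^{-n-2s}|\le C^{|\alpha|}\alpha!\,|y|^{-n-2s-|\alpha|}$ (via the Gegenbauer generating function) justify interchanging sum and integral for $|x|$ small. The integrability of $|f(y)||y|^{-n-2s-k}$ on $|y|>\epsilon$ holds because $f$ is bounded. Vanishing of $F$ near $0$ then forces all moments to vanish:
\begin{gather*}
	\int_{|y|>\epsilon} f(y)\,\partial^\alpha\bigl(|y|^{-n-2s}\bigr)\,dy=0\quad\text{for all multi-indices }\alpha.
\end{gather*}

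Next we convert these identities into the vanishing of $f$. We would write $\partial^\alpha|y|^{-n-2s}=P_\alpha(y)|y|^{-n-2s-2|\alpha|}$ with homogeneous polynomials $P_\alpha$ of degree $|\alpha|$. Then we pass to polar coordinates $y=r\theta$ and decompose $f(r\theta)$ in spherical harmonics $Y_{k,j}(\theta)$ with radial coefficients $f_{k,j}(r)$. The polynomials $P_\alpha$ with $|\alpha|=m$ span $\{|y|^{2i}H_{m-2i}\}$, i.e.\ all harmonics of degree $m,m-2,\dots$. The key step is to show this span is all of them; it holds because the family $\partial^\alpha|y|^{-n-2s}$ contains $\Delta^i$-type contributions with nonzero coefficients, since $-n-2s$ is not an integer of the critical form. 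This step needs care. Granting it, we get for each fixed $(k,j)$ that
\begin{gather*}
	\int_\epsilon^\infty f_{k,j}(r)\,r^{-2s-1-m}\,dr=0
\end{gather*}
for all $m\ge k$ with $m\equiv k \pmod 2$. Setting $t=\epsilon^2/r^2\in(0,1)$ turns this into $\int_0^1 g(t)\,t^{j}\,dt=0$ for all $j\in\nat_0$, with $g$ integrable. By Weierstrass/Müntz, $g=0$. Hence all $f_{k,j}=0$, and by continuity $f\equiv0$.

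The main obstacle is the middle step: extracting from the derivatives $\partial^\alpha|y|^{-n-2s}$ enough independent angular-radial test functions. Concretely, one must check that the coefficient linking $\partial^\alpha|y|^{-\lambda}$ to $|y|^{-\lambda-2m+2i}H_{m-2i}$ is nonzero, which amounts to $\lambda=n+2s$ avoiding a discrete set (products like $\lambda(\lambda+2)\cdots$ and $(\lambda-n+2)\cdots$). This holds since $s\in(0,1)$. If this bookkeeping becomes messy, a fallback is to use only the radial derivatives $(x\cdot\nabla_x)^m$ together with rotations of $x$, which directly produce Gegenbauer polynomials $C_m^{\lambda/2}(\theta\cdot\omega)$ whose Funk–Hecke coefficients are explicit and nonvanishing for $\lambda/2=n/2+s$.
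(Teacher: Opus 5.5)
Your argument is correct, but it runs on the dual side of the paper's proof.

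\textbf{The paper's route.} The paper stays with measures. Difference quotients of shifts put the derivative measures $\partial^\alpha|y|^{-n-2s}\,dy$ into the vague closure $D$ of $\spann\Sigma(\nu,\epsilon)$. An elementary induction, using one product-rule identity and one Laplacian identity, then shows $y^\beta|y|^{-2m-n-2s}\,dy\in D$ for $|\beta|\le m$. Finally, Stone--Weierstrass for the algebra generated by $y^\beta/|y|^{2m}$, together with the integrability of $|y|^{-n-2s}$ on $\{|y|>\epsilon\}$, gives vague density.

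\textbf{Your route.} You take an annihilating $f$ and get vanishing moments against the same derivatives. Differentiating $F$ under the integral at $x=0$ is enough here; real analyticity is not needed. You then separate variables with spherical harmonics, which splits the problem into one-dimensional Hausdorff moment problems on $(0,1)$. Each is settled by the ordinary Weierstrass theorem, since all integer powers occur and $g(t)=c\,f_{k,j}(\epsilon t^{-1/2})\,t^{s+k/2-1}$ is integrable; M\"untz is unnecessary.

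\textbf{The step you left as ``granted''.} It holds and is short. For a solid harmonic $h$ of degree $k$, Hobson's formula gives $h(\partial)|y|^{-\lambda}=(-1)^k\lambda(\lambda+2)\cdots(\lambda+2k-2)\,h(y)|y|^{-\lambda-2k}$. A direct computation gives $\Delta\bigl(h\,|y|^{-\mu}\bigr)=\mu(\mu+2-n-2k)\,h\,|y|^{-\mu-2}$. With $\lambda=n+2s$ and $\mu=n+2s+2k+2j$, the last factor is $(n+2s+2k+2j)(2s+2j+2)>0$. Hence $\Delta^i h(\partial)|y|^{-n-2s}$ is a nonzero multiple of $h(y)|y|^{-n-2s-2k-2i}$. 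That is all you use; the full statement that the $P_\alpha$ span every homogeneous polynomial of degree $m$ follows but is not needed. These nonvanishing factors are the same ones that drive the paper's induction, namely $(2m+n+2s)$ and $\mu(\mu+2-n)$ with $\mu=2m+n+2s$.

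\textbf{What each route buys.} The paper's proof is shorter and uses no harmonic analysis on the sphere. However, its $n$-dimensional step is only sketched (``use the argument of the one-dimensional case''), leaving the algebra and point-separation check, e.g.\ via $y/|y|^2$, implicit. Your proof needs spherical harmonics and Hobson's formula. In return it is explicit in every dimension and isolates exactly which nondegeneracy of the exponent $n+2s$ is used.
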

\begin{proof}
We consider first the one-dimensional case $n=1$. The L\'evy measure is given by $\nu(dy) = c |y|^{-1-2s}\,dy$, $s\in (0,1)$, with a normalization constant $c=c_s$. Fix $\epsilon>0$. Since $\Sigma(\nu,\epsilon)$ contains (small) shifts of $\nu(\cdot + x)$, we know that the closure of $\spann(\Sigma(\nu,\epsilon))$ contains the (signed) measures 
\begin{gather*}
	\nu^{(k)}(dy)
	:= \frac{d^k}{dy^k} |y|^{-1-2s}\,dy
	= c_{k,s}\cdot \sgn(y)^{k}|y|^{-k}|y|^{-1-2s}\,dy,\quad k\in\nat_0,
\end{gather*} 
It is easy to see that the span of $\left\{ \sgn(y)^k|y|^{-k} \mid k\in \nat\right\}$ is a subalgebra of $C_\infty\left(B^c_\epsilon(0)\right)$. This subalgebra separates points and vanishes nowhere, so we can use the Stone-Weier\-strass theorem to see that the closure of the subalgebra with respect to $\|\cdot\|_\infty$ is $C_\infty\left(B^c_\epsilon(0)\right)$. As $\left\{\phi(x)\,dy \mid \phi\in C_c\left(B^c_\epsilon(0)\right),\;\phi\geq 0\right\}$ is weakly dense in the space of bounded measures with support in $B_\epsilon^c(0)$, it is enough to show that we can approximate every $\phi\in C_c\left(B^c_\epsilon(0)\right)$ in the $L^1(B^c_\epsilon(0))$-norm by finite linear combinations of the measures $\nu^{(k)}$.

To do so, pick $\phi\in C_c(B^c_\epsilon(0))$ and a sequence $(\phi_n)_{n\in\nat}\subset \spann\left\{ \sgn(y)^{k}|y|^{-k} \mid k\in\nat\right\}$, which converges uniformly to $|y|^{1+2s}\phi(y)$. We see that
\begin{align*}
	\int_{B_\epsilon(0)^c}\left|\phi(y)-|y|^{-1-2s}\phi_n(y)\right|dy
	\leq \int_{B_\epsilon(0)^c} |y|^{-1-2s}\,dy \sup_{y\in B_\epsilon(0)^c}\left||y|^{1+2s}\phi(y)-\phi_n(y)\right|
	\xrightarrow[n\to\infty]{} 0.
\end{align*}
By construction, $|y|^{-1-2s}\phi_n(y)\,dy$ is in the span of the $\nu^{(k)}$'s, and so we have proved that all L\'evy operators with characteristic triplet $(\ell,Q,\nu),\; \ell,Q\in\real$, and, in particular (take $\ell=Q=0$), the one-dimensional fractional Laplacian, enjoy the UCP.

\bigskip 
We will now turn to the multivariate setting $n>1$. We are going to show that
\begin{align}\label{ucpl-e12} 
	\left\{\frac{x^{\beta}}{|x|^{2m+n+2s}} \;\middle|\; m\in\nat_0,\;\beta\in \nat_0^n,\; |\beta|\leq m\right\}
	\subset \overline{\spann\Sigma(\nu,\epsilon)}
	=: D,
\end{align}
where we use the standard multiindex notation $x^{\beta}:=x_1^{\beta_1}\cdots x_n^{\beta_n}$. We prove \eqref{ucpl-e12} by induction in $m\in\nat_0$. It is clear that \eqref{ucpl-e12} holds for $m=0$. Assume that \eqref{ucpl-e12} holds for some $m\in\nat_0$. For the induction step $m\to m+1$ we use that for any $i=1,\dots, n$
\begin{align*}
	\partial_{x_i} \frac{x^{\beta}}{|x|^{2m}}\frac{1}{|x|^{n+2s}}
	=-(2m+n+2s)x_i\frac{x^{\beta}}{|x|^{2(m+1)}}\frac{1}{|x|^{n+2s}}+(\partial_{x_i}x^{\beta})\frac{1}{|x|^{2m}}\frac{1}{|x|^{n+2s}}.
\end{align*} 
If we rearrange this identity, the induction assumption shows that $\frac{x^{\beta}}{|x|^{2(m+1)}}\frac{1}{|x|^{n+2s}}\in D$ for all $\beta\in \nat_0^n$ such that $m+1\geq |\beta|\geq 1$. For $\beta=0$ we observe that
\begin{align*}
	\partial_{x_i}^2\frac{1}{|x|^{2m+n+2s}}
	=(2m+n+2s)(2(m+1)+n+2s)\frac{x_i^2}{|x|^{2(m+2)+n+2s}}-\frac{(2m+n+2s)}{|x|^{2m+1+n+2s}}.
\end{align*}
Summing this identity over $i=1,\dots,n$ and rearranging the result shows that the function ${|x|^{-2(m+1)-n-2s}}$ is contained in $D$. This completes the induction step. 

From this point onward, we can use the argument of the one-dimensional case.
\end{proof}

\begin{remark}\label{ucpl-23}
	The strategy used in the proof of Corollary~\ref{ucpl-21} extends to one-dimensional L\'evy operators whose L\'evy measure is of the form
	\begin{align*}
		\nu(dy) = c_r |y|^{-1-2r}\I_{(-\infty,0)}(y)\,dy + c_s |y|^{-1-2s}(y)\I_{(0,\infty)}(y)\,dy
	\end{align*}
	for $r,s\in (0,1)$, $r\neq s$: such non-symmetric operators enjoy the UCP, too.
\end{remark}

\section{The unique continuation principle for resolvents and semigroups}\label{rs}

The UCP for the fractional Laplace operator is equivalent to a classical result from potential theory due to M.\ Riesz \cite[Chapter VIII]{riesz}. Denote by $R_\lambda := (\lambda + (-\Delta)^s)^{-1}$, $\lambda>0$ the resolvent operators of the fractional Laplacian and $\emptyset \neq G\subset\rn$ an open set. Then
\begin{gather}\label{riesz}\tag{\textsf{Riesz}}
	f|_G = 0\;\;\&\;\; R_1f|_G = 0
	\implies R_1f=0 \;\;\text{and}\;\; f=0.
\end{gather}
Set $L=(-\Delta)^s$ and $f=u-Lu$. Clearly,
\begin{gather*}
	Lu|_G = 0 = u|_G
	\iff
	f|_G = 0 = R_1f|_G
\end{gather*}
and this shows that the conclusions of the \eqref{ucp}, $u=0$, and \eqref{riesz}, $f=0$, are equivalent.

In fact, the property \eqref{riesz} is nothing but the UCP for the $1$-resolvent.

This argument extends literally to L\'evy operators and their $\lambda$-resolvent operators $R_\lambda = (\lambda+\psi(D))^{-1}$ for any $\lambda>0$, and on any Banach space $L^p(\rn)$, $p\in [1,\infty)$ or $C_\infty(\rn)$. We have
\begin{lemma}\label{rs-03}
	A L\'evy operator $(A,\Dcal(A))$ satisfies the UCP if, and only if, the $\lambda$-resolvent operator  $R_\lambda = (\lambda-A)^{-1}$ satisfies the UCP for some \textup{(}and then for all\textup{)} $\lambda>0$.
\end{lemma}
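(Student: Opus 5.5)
The plan is to use the fact that $R_\lambda$ is a bijection from the underlying Banach space $\Xcal$ (either $C_\infty(\rn)$ or $L^p(\rn)$) onto $\Dcal(A)$, and to check that the identity $A R_\lambda f = \lambda R_\lambda f - f$ translates the two hypotheses into each other. Here the UCP for $R_\lambda$ means: for $f\in\Xcal$ and the fixed open set $G$,
\begin{gather*}
	f|_G = 0 \;\;\&\;\; R_\lambda f|_G = 0 \implies f = 0.
\end{gather*}
The argument is the one sketched for \eqref{riesz} before the lemma, carried out for a general $\lambda>0$.

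First, suppose $A$ has the UCP and fix $\lambda>0$. Let $f\in\Xcal$ with $f|_G=0$ and $R_\lambda f|_G=0$. Set $u:=R_\lambda f\in\Dcal(A)$. Then $Au=\lambda u-f$, so $Au|_G = \lambda u|_G - f|_G = 0$, and also $u|_G=0$. The UCP for $A$ gives $u=0$, hence $f=(\lambda-A)u=0$. Thus $R_\lambda$ has the UCP for every $\lambda>0$.

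Conversely, suppose $R_\lambda$ has the UCP for some $\lambda>0$. Let $u\in\Dcal(A)$ with $Au|_G=0=u|_G$. Set $f:=\lambda u-Au\in\Xcal$, so that $R_\lambda f=u$. Then $f|_G=\lambda u|_G-Au|_G=0$ and $R_\lambda f|_G=u|_G=0$, hence $f=0$ and therefore $u=R_\lambda f=0$. So $A$ has the UCP. Since the first direction holds for every $\lambda$, this also proves the \enquote{for some, and then for all} statement.

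The only delicate point is to make sure that \enquote{restriction to $G$} is meaningful for both $Au$ and $f$. In $C_\infty(\rn)$ they are continuous functions, so restrictions are pointwise. In $L^p(\rn)$ they are a.e.\ equivalence classes, and restriction to $G$ is linear and compatible with the identity $f=\lambda u-Au$. Invertibility of $\lambda-A$ for $\lambda>0$ is Hille--Yosida for the contraction semigroup $(P_t)_{t\geq 0}$. I do not expect a genuine obstacle: the argument is purely algebraic once $R_\lambda:\Xcal\to\Dcal(A)$ is known to be bijective with inverse $\lambda-A$. If one also wants the $H^{\psi,t}_p$ version, Lemma~\ref{ucpl-09} reduces it to the $C_\infty$ case.
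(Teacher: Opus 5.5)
Your proof is correct and follows the paper's own argument. Before the lemma, the paper sets $f=u-Lu$ (so $u=R_1f$), observes that $Lu|_G=0=u|_G$ holds exactly when $f|_G=0=R_1f|_G$, and says this carries over literally to $R_\lambda=(\lambda-A)^{-1}$ for every $\lambda>0$ on $C_\infty(\rn)$ or $L^p(\rn)$; you write out both directions with $f=\lambda u-Au$ and spell out the ``for some, hence for all'' step, which the paper leaves implicit.
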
 

We will now give a probabilistic interpretation of the UCP for a L\'evy operator. As before, we denote by $(X_t)_{t\geq 0}$ the L\'evy process, which is generated by the L\'evy operator $(A,\Dcal(A))$, $\Dcal(A)\subset C_\infty(\rn)$; by $\tau$ we denote an independent exponentially distributed random variable, i.e.\ $\law(\tau) = \lambda e^{-\lambda x}\,dx$, $\lambda, x>0$.

\begin{corollary}\label{rs-05}
	Let $(A,\Dcal(A))$, $\Dcal(A)\subset C_\infty(\rn)$, be a L\'evy operator and $(X_t)_{t\geq 0}$ the associated L\'evy process. Then $A$ has the unique continuation property if, and only if, for every independent exponentially distributed random variable $\tau$ and for every $\epsilon>0$, the span of $\Sigma(\law(X_\tau),\epsilon)$ is vaguely dense in the space of finite signed measures on $\rn\setminus \overline{B_{\epsilon}(0)}$.
\end{corollary}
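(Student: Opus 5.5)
The plan is to combine Lemma~\ref{rs-03} with the duality argument in the proof of Theorem~\ref{ucpl-07}, applied to the resolvent instead of the generator. The starting point is the probabilistic form of the resolvent. If $\tau$ is independent of $(X_t)$ with $\law(\tau)=\lambda e^{-\lambda t}\,dt$, then Fubini gives, for $f\in C_\infty(\rn)$,
\begin{gather*}
	\lambda R_\lambda f(x) = \int_0^\infty \lambda e^{-\lambda t} P_t f(x)\,dt = \int f(x+y)\,\mu^\lambda(dy),
	\qquad \mu^\lambda := \law(X_\tau) = \int_0^\infty \lambda e^{-\lambda t}\mu_t\,dt.
\end{gather*}
So $\lambda R_\lambda$ is convolution with the probability measure $\mu^\lambda$. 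It plays the role that the L\'evy measure $\nu$ played for $A$, with one simplification: there is no singularity at the origin and no differential part.

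By Lemma~\ref{rs-03}, $A$ has the UCP if and only if $R_\lambda$ has the UCP for one, equivalently every, $\lambda>0$. We take the statement of the UCP for $R_\lambda$ to be: $f|_G=0$ and $R_\lambda f|_G=0$ imply $f=0$, for $f\in C_\infty(\rn)$. By translation invariance it suffices to take $G=B_\epsilon(0)$. The task is then to show that, for fixed $\lambda$ and $\epsilon$, this UCP on $B_\epsilon(0)$ is equivalent to vague density of $\spann\Sigma(\mu^\lambda,\epsilon)$ in $\Mcal_b(\rn\setminus\overline{B_\epsilon(0)})$. The quantifier over all $\tau$ then matches the quantifier over all $\lambda$ in Lemma~\ref{rs-03}.

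For the duality step, let $f\in C_\infty(\rn)$ vanish on $B_\epsilon(0)$. For $x\in B_\epsilon(0)$,
\begin{gather*}
	\lambda R_\lambda f(x) = \int_{\rn\setminus B_\epsilon(0)} f(y)\,\mu^\lambda(dy-x).
\end{gather*}
We must handle the sphere $\partial B_\epsilon(0)$, where $f$ vanishes by continuity. The integral therefore effectively runs over $\rn\setminus\overline{B_\epsilon(0)}$, and $f|_{\rn\setminus\overline{B_\epsilon(0)}}\in C_\infty(\rn\setminus\overline{B_\epsilon(0)})$. Consequently, $R_\lambda f|_{B_\epsilon(0)}=0$ holds exactly when this restriction lies in ${}^{\perp}\spann\Sigma(\mu^\lambda,\epsilon)$.

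Conversely, any $g\in C_\infty(\rn\setminus\overline{B_\epsilon(0)})$ extended by $0$ is an admissible $f\in C_\infty(\rn)$. No mollification is needed here, since $R_\lambda$ is defined on all of $C_\infty(\rn)$; this makes the argument simpler than in Theorem~\ref{ucpl-07}. Hence the UCP for $R_\lambda$ on $B_\epsilon(0)$ is equivalent to ${}^{\perp}\spann\Sigma(\mu^\lambda,\epsilon)=\{0\}$. By Lemma~\ref{ucpl-05}, with $\Xcal=C_\infty(\rn\setminus\overline{B_\epsilon(0)})$ and $\Xcal^*=\Mcal_b(\rn\setminus\overline{B_\epsilon(0)})$ by Riesz representation, this is equivalent to weak-$*$, that is vague, density of the span.

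The main obstacle is the bookkeeping of quantifiers and of the set $G$. Lemma~\ref{rs-03} is stated for the UCP on "some" open set, while the corollary demands density for every $\epsilon$. I would settle this as in Theorem~\ref{ucpl-07}. The UCP on some ball implies the UCP on every ball: every ball is a translate of a ball around the origin, and an inclusion $B_r(x)\subset G$ gives the UCP on $G$. I would also check carefully that the boundary $\partial B_\epsilon(0)$ carries no issue, in case $\mu^\lambda(\partial B_\epsilon(0)-x)>0$. This is harmless because $f=0$ there. The remaining step, that $\lambda R_\lambda$ has kernel $\law(X_\tau)$, is routine.
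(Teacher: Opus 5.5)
Your route is the paper's route. You identify $\lambda R_\lambda$ as convolution with $\law(X_\tau)$, as in \eqref{rs-e06}, and then combine the annihilator argument of Lemma~\ref{ucpl-05} with Lemma~\ref{rs-03}. Your extra care pays off in two places the paper leaves implicit. First, $f$ vanishes on the sphere $\partial B_\epsilon(0)$ by continuity. Second, no mollification is needed, because $R_\lambda$ acts on all of $C_\infty(\rn)$, so every $g\in C_\infty(\rn\setminus\overline{B_\epsilon(0)})$ extended by zero is admissible. Both points are correct.

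The one step that fails is your resolution of the quantifier issue. Translation invariance and inclusion only move the UCP from $B_r(x)$ to sets that contain a ball of radius $r$, that is, to equal or larger balls. They never give the UCP on $B_{r'}(0)$ with $r'<r$, and a general L\'evy operator has no scaling to help. Shrinking $G$ weakens the hypothesis $Au|_G=0=u|_G$, so the UCP on small balls is the stronger property. Density of $\spann\Sigma(\law(X_\tau),\epsilon)$ for every $\epsilon>0$ needs it for arbitrarily small $\epsilon$. Hence your claim that the UCP on some ball implies the UCP on every ball is unjustified. It is exactly what the direction ``$A$ has the UCP $\Rightarrow$ density for all $\epsilon$'' would require if the UCP only meant ``for some open set''.

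You do not actually need that claim. The proof of Lemma~\ref{rs-03} works for each fixed $G$: with $u=R_\lambda f$ and $f=\lambda u-Au$, you have $Au|_G=0=u|_G$ if and only if $f|_G=0=R_\lambda f|_G$. So for every $\epsilon>0$ and every $\lambda>0$, $A$ has the UCP on $B_\epsilon(0)$ if and only if $R_\lambda$ does. Read ``$A$ has the UCP'' as ``on every ball $B_\epsilon(0)$''. This is how Theorem~\ref{ucpl-07} implicitly uses it with its ``for every $\epsilon>0$'', and how the introduction describes it. Then you match the two sides $\epsilon$ by $\epsilon$, and your fixed-$\epsilon$ duality argument completes the proof.
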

\begin{proof}
	Let $\tau$ be an independent (of $(X_t)_{t\geq 0}$) exponentially distributed random variable and observe that $P_tu(x) = \Ee u(X_t+x)$ is the transition semigroup of the L\'evy process. Denote by $\lambda^{-1}\in (0,\infty)$ the mean value of $\tau$, i.e.\ $\law(\tau) = \lambda e^{-\lambda t}\,dt$, $t>0$. Then, 
	\begin{gather}\label{rs-e06}
		\Ee u(X_{\tau}+x) 
		= \int_0^\infty \Ee u(X_t+x) \lambda e^{-\lambda t}\,dt
		= \lambda \int_0^\infty P_tu(x)\,e^{-\lambda t}\,dt
		= \lambda R_{\lambda}u(x).
	\end{gather} 
	Therefore, the result follows from Lemma \ref{ucpl-05} and \ref{rs-03}.
\end{proof}

\begin{corollary}\label{rs-07}
	Let $(A,\Dcal(A))$, $\Dcal(A)\subset C_\infty(\rn)$, be a L\'evy operator and $(X_t)_{t\geq 0}$ the associated L\'evy process. If $A$ satisfies the UCP, then for every $\epsilon>0$ and for every independent exponentially distributed random variable $\tau$ the span of the measures \begin{gather*}
		\left\{\law(X_\tau)(x+\cdot)|_{\rn\setminus \overline{B_{\epsilon}(0)}}\;\middle|\; x\in B_{\epsilon}(0),\;t> 0\right\}
	\end{gather*} 
	is vaguely dense in the space of finite signed measures on $\rn\setminus \overline{B_{\epsilon}(0)}$.
\end{corollary}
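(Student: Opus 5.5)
The plan is to reduce the statement to Corollary~\ref{rs-05}. The measures $\law(X_\tau)(x+\cdot)|_{\rn\setminus\overline{B_\epsilon(0)}}$, $x\in B_\epsilon(0)$, are exactly the elements of $\Sigma(\law(X_\tau),\epsilon)$. The extra parameter $t>0$ in the displayed family does not enter the measures and only re-indexes them. So the set whose span we must show is dense coincides with $\Sigma(\law(X_\tau),\epsilon)$, and the claim is the \enquote{only if} direction of Corollary~\ref{rs-05}.

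To keep the argument self-contained, I would still spell out the duality step behind that direction. Assume $A$ satisfies the UCP. By Lemma~\ref{rs-03}, $R_\lambda$ satisfies the UCP for every $\lambda>0$. Fix $\epsilon>0$ and an independent $\tau$ with parameter $\lambda$. By Lemma~\ref{ucpl-05}, applied on the Banach space $C_\infty(\rn\setminus\overline{B_\epsilon(0)})$, it suffices to show the following: if $f\in C_\infty(\rn\setminus\overline{B_\epsilon(0)})$ satisfies
\begin{gather*}
	\int_{\rn\setminus\overline{B_\epsilon(0)}} f(y)\,\law(X_\tau)(x+dy)=0\quad\text{for all } x\in B_\epsilon(0),
\end{gather*}
then $f=0$.

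Extend $f$ by $0$ on $\overline{B_\epsilon(0)}$ to a function in $C_\infty(\rn)$. Exactly as in the proof of Theorem~\ref{ucpl-07}, mollify with a kernel supported in a small ball, at the cost of shrinking $\epsilon$ slightly. Using \eqref{rs-e06}, the vanishing condition becomes $\lambda R_\lambda f(x)=\Ee f(X_\tau+x)=0$ for $x$ near $0$, up to the sign convention for the shift, which is harmless after replacing $x$ by $-x$ or using the reflected process. At the same time $f=0$ on a neighbourhood of $0$. The UCP for $R_\lambda$, in the form \eqref{riesz}, then forces $f=0$, so the annihilator is trivial and the span is vaguely dense.

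The main obstacle is bookkeeping rather than substance. First, the shift convention $\law(X_\tau)(x+\cdot)$ must be matched with $\Ee f(X_\tau+x)$, which is settled by a change of variables. Second, the mollification must preserve both vanishing conditions on a slightly smaller ball, as in Theorem~\ref{ucpl-07}. Once these are aligned, the statement is an immediate consequence of Corollary~\ref{rs-05} together with Lemma~\ref{rs-03}.
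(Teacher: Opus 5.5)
Your proof is correct for the statement as printed. With the index $t>0$ idle, the family is exactly $\Sigma(\law(X_\tau),\epsilon)$, and your duality argument via Lemma~\ref{ucpl-05}, \eqref{rs-e06} and Lemma~\ref{rs-03} goes through. The reflection $x\mapsto -x$ is harmless because $B_\epsilon(0)$ is symmetric. The mollification can even be dropped: $R_\lambda$ maps all of $C_\infty(\rn)$ into $\Dcal(A)$, so the resolvent UCP applies to the zero extension of $f$ directly.

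The paper reads the family differently and argues along a different line. From a nonzero smoothed annihilator $u$ it extracts $u=0$ and $P_tu=0$ on $B_\epsilon(0)$ for $t>0$. It then integrates to get $R_\lambda u=\int_0^\infty e^{-\lambda t}P_tu\,dt=0$ there, and invokes Lemma~\ref{rs-03}. So the authors treat $t$ as a live index and really mean the semigroup family $\law(X_t)(x+\cdot)|_{\rn\setminus\overline{B_\epsilon(0)}}$, $x\in B_\epsilon(0)$, $t>0$. Otherwise the corollary would merely repeat half of Corollary~\ref{rs-05}, as you noticed.

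Your argument covers that version after one extra line, which is exactly what the paper's Laplace-transform step encodes. An $f$ annihilating every $\law(X_t)(x+\cdot)$ also annihilates the mixture $\law(X_\tau)(x+\cdot)=\int_0^\infty\lambda e^{-\lambda t}\law(X_t)(x+\cdot)\,dt$, by Fubini. Hence the annihilator of the semigroup family lies in $\prescript{\perp}{}{\spann\Sigma(\law(X_\tau),\epsilon)}=\{0\}$.

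So your route gives the stronger conclusion: density already holds for one fixed exponential time, i.e.\ one $\lambda$, and the semigroup version follows from it. The paper instead goes straight from the semigroup to the resolvent without passing through Corollary~\ref{rs-05}.
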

\begin{proof}
	Assume that $\left(\law(X_\tau)(x+\cdot)|_{\rn\setminus \overline{B_{\epsilon}(0)}}\right)_{x\in B_{\epsilon}(0),t> 0}$ is not dense. Then, there exists (by Lemma \ref{ucpl-05} and smoothing out) a function $u\in C^\infty_\infty(\rn)\setminus\{0\}$ such that $u=0$ in and $P_tu=0$ in $B_{\epsilon}(0)$ for some $\epsilon>0$. We conclude that $R_{\lambda} u(x)=\int_0^\infty e^{-\lambda t} P_t u(x)\,dt=0$ for all $x\in B_{\epsilon}(0)$. In view of  Lemma \ref{rs-03}, the operator $A$ cannot satisfy the unique continuation principle.
\end{proof}

We conclude this section with an example which shows that there are L\'evy operators, which do do not satisfy the UCP (and whose resolvents do not satisfy the UCP, cf.\ Lemma~\ref{rs-03}), although the semigroups generated by them do have the UCP. 
\begin{example}
	The semigroup generated by Brownian motion satisfies the unique continuation principle. Since the generator of Brownian motion is $\frac 12\Delta$, it is obvious that neither the generator nor its resolvent satisfy the UCP.
	
	Denote by $p_t(x)=(2\pi t)^{-n/2}\exp(-|x|^2/t)$ the density of Brownian motion at time $t>0$. If $u\in C_\infty(\rn)$, the function $p_t\ast u$ can be holomorphically extended to $\mathbb{C}^n$. If $p_t\ast u=0$ in an open set on $\rn$, it follows that $p_t\ast u=0$ everywhere. Hence, it follows that that the semigroup generated by the Brownian motion satisfies the unique continuation principle.
\end{example}

\section{The uniform continuation principle for discrete L\'evy operators}\label{dis}

A non-degenerate \emph{random walk} on a lattice $L\subset\rn$, $0\in L$, is a stochastic process of the form
\begin{gather*}
	R_0:=0
	\quad\text{and}\quad 
	R_m := \sigma_1+\dots+\sigma_m,\quad m\in\nat,
\end{gather*}
where the steps $\sigma_k\neq 0$, $k\in\nat$, are independent, identically distributed random variables such that $p_\ell := \Pp(\sigma_1 = \ell)\geq 0$ and $\sum_{\ell\in L} p_\ell=1$. If we replace the number of steps $m\in\nat$ by a(n integer-valued) Poisson process $(N_t)_{t\geq 0}$,\footnote{The law of a Poisson random variable $N_t$ is $\Pp(N_t = m) = \frac{t^m}{m!} e^{-t}$ for $m\in\nat_0$.} which is independent of the steps $(\sigma_k)_{k\in\nat}$, then $R_{N_t}$ is a L\'evy process, and the characteristic exponent is given by
\begin{align*}
	\Ee e^{iR_{N_t}\xi}
	= \sum_{m=0}^\infty \Ee e^{i(\sigma_1+\dots+\sigma_m) \xi}\Pp(N_t = m)
	= \sum_{m=0}^\infty \left(\Ee e^{i\sigma_1 \xi}\right)^m \frac{t^m e^{-t}}{m!}
	= \exp\left[-t\left(1-\Ee e^{i\sigma_1\xi}\right)\right].
\end{align*}
Therefore,
\begin{gather}\label{dis-e01}
	\psi_R(\xi) = \sum_{\ell\in L} p_\ell\left(1-e^{i \ell\xi}\right),\quad \xi\in\rn,
\end{gather}
i.e.\ $\psi_R$ is of the form \eqref{nlo-e08} with the triplet $(0,0,\nu)$ and $\nu(dy) = \sum_{\ell\in L} p_\ell\delta_\ell(dy)$. The infinitesimal generator is
\begin{gather}\label{dis-e02}
	A_Ru(x) = -\psi_R(D)u(x) = \sum_{\ell\in L} p_\ell\left(u(x+\ell)-u(x)\right),\quad x\in\rn.
\end{gather}

If $R$ is the nearest-neighbour random walk on $L=\integer^n$, i.e.\ $\Pp(\sigma_1 = \pm e_j) = (2n)^{-1}$ for the coordinate unit vectors $e_j\in\integer^n$,  $j=1,\dots, n$, then \eqref{dis-e02} becomes the discrete Laplacian
\begin{gather}\label{dis-e04}
\begin{aligned}
	\Delta_n u(x) 
	= \frac 1{2n}\sum_{|e|=1} \left(u(x+e)-u(x)\right)
	= \frac 1{2n}\sum_{k=1}^n \left(u(x+e_k)+u(x-e_k)-2u(x)\right).
\end{aligned}
\end{gather}

A simple way to construct the fractional discrete Laplacian is to take the $s$-th root of $\Delta_n$ by using a suitable functional calculus, see e.g.\ \cite{ber-et-al} for an approach via Fourier series. We use the \emph{Bernstein functional calculus}, see \cite[Chapter 13]{SSV}, and \emph{Bochner's subordination} principle. This allows us to consider functions $f(-A_R)$ of the infinitesimal generator $A_R$ (e.g.\ $A_R=\Delta_n$), where $f:(0,\infty)\to [0,\infty)$ is a Bernstein function, i.e.\ a $C^\infty$-function given by the following formula
\begin{gather}\label{dis-e06}
	f(\lambda) = \beta\lambda + \int_{(0,\infty)}\left(1-e^{-\lambda t}\right) \rho(dt),\quad \lambda >0,
\end{gather}
with $\beta\geq 0$ and a measure $\rho$ on $(0,\infty)$ such that $\int_{(0,\infty)} \min\{t,1\}\,\rho(dt)<\infty$. Typical examples are the fractional powers $\lambda^s$, $s\in (0,1)$, with $(\beta,\rho(dt))=\left(0, s\Gamma(1-s)^{-1} t^{-1-s}\,dt\right)$ or the logarithm $\log(1+\lambda)$ with $(\beta,\rho(dt))=\left(0, t^{-1}e^{-t}\,dt\right)$.

Every Bernstein function $f$ corresponds to an increasing one-dimensional L\'evy process (a so-called \emph{subordinator}) $(T_t)_{t\geq 0}$ via the Laplace transform
\begin{gather*}
	\Ee e^{-\lambda T_t} = e^{-tf(\lambda)},\quad t,\lambda >0,
\end{gather*} 
and Bochner's subordination principle tells us that $-f(-A_R)$ is the infinitesimal generator of the time-changed L\'evy process $R_{N_{T_t}}$, $t\geq 0$. The symbol of the infinitesimal generator is $-f(\psi_R(\xi))$ with $\psi_R$ from \eqref{dis-e01}.

We will need the following lemma on extensions of a Bernstein function \enquote{to the left}.
\begin{lemma}\label{dis-03}
	Let $f:(0,\infty)\to [0,\infty)$ be a Bernstein function given by \eqref{dis-e06}, let $\alpha>0$, and let $(T_t)_{t\geq 0}$ be the corresponding subordinator. The following assertions are equivalent.
	\begin{enumerate}\itemsep=6pt
	\item[\textup a)]
		$\Ee e^{\alpha T_t}<\infty$ for some \textup{(}hence, all\textup{)} $t>0$. 
	\item[\textup b)]
		$\int_{(1,\infty)} e^{\alpha s}\,\rho(ds)<\infty$. 
	\item[\textup c)]
		$f$ has an extension onto $(-\alpha,\infty)$, which is continuous in $[-\alpha,\infty)$, infinitely often differentiable in $(-\alpha,\infty)$ such that $(-1)^{k+1} f^{(k)}\geq 0$ for all $k\in\nat$.
	\item[\textup d)]
		$f$ has an real-analytic extension onto $(-\alpha,\infty)$, which is continuous in $[-\alpha,\infty)$.
	\end{enumerate}
\end{lemma}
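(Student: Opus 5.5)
I will prove the chain a)$\Leftrightarrow$b), b)$\Rightarrow$d)$\Rightarrow$c)$\Rightarrow$b). The basic tool is the Lévy–Khintchine representation of the subordinator: the Lévy measure of $(T_t)_{t\geq0}$ is $\rho$ and its drift is $\beta$. Everything is reduced to properties of the function
\begin{gather*}
	g(\lambda) := \int_{(0,\infty)}\left(1-e^{-\lambda s}\right)\rho(ds),\quad \lambda\geq -\alpha .
\end{gather*}

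For a)$\Leftrightarrow$b) I use the classical moment criterion for Lévy processes. For a submultiplicative function $h$, here $h(s)=e^{\alpha s}$, one has $\Ee h(T_t)<\infty$ for some, equivalently all, $t>0$ if and only if $\int_{(1,\infty)} h(s)\,\rho(ds)<\infty$; see e.g.\ Sato's monograph, Thm.~25.3, or \cite{BSW}. A direct proof is also possible. Split $\rho=\rho|_{(0,1]}+\rho|_{(1,\infty)}$, so that $T_t=T_t^{(1)}+T_t^{(2)}$ with independent summands. The small-jump part $T_t^{(1)}$ has exponential moments of every order. The large-jump part is compound Poisson, and $\Ee e^{\alpha T^{(2)}_t}=\exp\left(t\int_{(1,\infty)}(e^{\alpha s}-1)\rho(ds)\right)$ in $[0,\infty]$. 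This gives both directions, using independence and positivity of the summands.

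For b)$\Rightarrow$d), define $f(\lambda)=\beta\lambda+g(\lambda)$ for $\lambda\geq-\alpha$. The integral converges by two observations. On $(0,1]$ we have $|1-e^{-\lambda s}|\leq C s$, uniformly for $\lambda$ in compacts. On $(1,\infty)$ we have $|1-e^{-\lambda s}|\leq 1+e^{\alpha s}$. Continuity on $[-\alpha,\infty)$ follows by dominated convergence. For real analyticity I extend to the complex half-plane $\{\Ree z>-\alpha\}$. There $z\mapsto 1-e^{-zs}$ is entire, and it is dominated locally uniformly by $C\min\{s,1\}+e^{a s}\I_{(1,\infty)}(s)$ for some $a<\alpha$. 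By Morera's theorem and Fubini, $f$ is holomorphic there, and hence real-analytic on $(-\alpha,\infty)$.

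For d)$\Rightarrow$c), I note that the analytic extension is unique, so it coincides with $\beta\lambda+g(\lambda)$ wherever the latter makes sense. I then differentiate under the integral: $f'(\lambda)=\beta+\int s e^{-\lambda s}\rho(ds)$ and $f^{(k)}(\lambda)=(-1)^{k+1}\int s^k e^{-\lambda s}\rho(ds)$ for $k\geq2$, which gives the required signs. The difficulty is that d) alone does not give b) for free. The cleanest route is therefore to prove d)$\Rightarrow$b) directly and then reuse the computation above for c).

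This step, d)$\Rightarrow$b) (and likewise c)$\Rightarrow$b)), is the main obstacle. I will argue via the Laplace transform of a positive measure and a Pringsheim/Landau-type theorem. The function $-f'(\lambda)+\beta=-\int s e^{-\lambda s}\rho(ds)$ is, up to sign, the Laplace transform of the positive measure $s\rho(ds)$. By Landau's theorem, the abscissa of convergence $\lambda_0$ of a Laplace transform of a positive measure is a singular point on the real axis. Since $f'$ extends analytically, respectively continuously and differentiably, to $(-\alpha,\infty)$, this forces $\lambda_0\leq-\alpha$. Hence $\int_{(1,\infty)} s e^{\gamma s}\rho(ds)<\infty$ for all $\gamma<\alpha$.

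To reach the endpoint $\gamma=\alpha$, I use continuity of $f$ at $-\alpha$ together with monotone convergence. For $\lambda\downarrow-\alpha$ the integrand $-(1-e^{-\lambda s})$ on $(1,\infty)$ increases to $e^{\alpha s}-1$. So $\int_{(1,\infty)}(e^{\alpha s}-1)\rho(ds)=\lim\left(\text{finite quantities}\right)=-f(-\alpha)+\text{finite}<\infty$, which is b).

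Under c) the same endpoint argument applies. The absolute monotonicity of $f'$ on $(-\alpha,\infty)$ replaces analyticity: by Bernstein's theorem, $f'$ is then the Laplace transform of a positive measure converging on $(-\alpha,\infty)$. By uniqueness of Laplace transforms, that measure is $\beta\delta_0+s\rho(ds)$.
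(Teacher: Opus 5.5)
Your argument is correct, but at the one nontrivial implication it takes a different route from the paper. For the other steps the two proofs agree in substance:
\begin{itemize}
\item For a)$\Leftrightarrow$b) the paper also cites Sato.
\item For b)$\Rightarrow$c) and b)$\Rightarrow$d) it extends the integral \eqref{dis-e06} just as you do.
\item For c)$\Rightarrow$b) it notes that $x\mapsto f(x-\alpha)-f(-\alpha)$ is a Bernstein function. It then compares that function's L\'evy measure $\rho_\alpha$ with $\rho$ to get $\rho_\alpha(dt)=e^{\alpha t}\rho(dt)$. This is the same mechanism as your Bernstein-theorem-plus-uniqueness argument for $f'$.
\end{itemize}

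The real difference is how d) is handled.
\begin{itemize}
\item The paper proves d)$\Rightarrow$c) by an elementary real-variable argument. The Taylor coefficients at $0$ satisfy $(-1)^{k+1}f^{(k)}(0)\geq 0$, as limits from $(0,\infty)$. The expansion at $0$ therefore transports these signs a little to the left of $0$, and a chaining argument covers $(-\alpha,0)$.
\item You prove d)$\Rightarrow$b) directly. You apply Landau's theorem to $f'-\beta$, the Laplace transform of the positive measure $s\rho(ds)$. You then reach the endpoint $-\alpha$ by monotone convergence and continuity of $f$ at $-\alpha$.
\end{itemize}
Your version gets the moment condition in one step. It also makes explicit where continuity at $-\alpha$ enters; in the paper this is hidden in $\rho_\alpha(1,\infty)<\infty$. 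The paper's version avoids the Landau black box, and its chaining argument is essentially the real-variable core of Landau's proof. However, it only reaches c), so it must then pass through c)$\Rightarrow$b).

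Two small corrections. First, $f'$ is \emph{completely} monotone on $(-\alpha,\infty)$, not absolutely monotone. Second, Landau's theorem requires a genuine analytic continuation. Your parenthetical ``respectively continuously and differentiably'' therefore does not by itself handle c). There it is the sign conditions, via Bernstein's theorem as in your last paragraph, that do the work. In that case you should also say why $f=\beta\lambda+g$ on $(-\alpha,0)$ before running the endpoint argument, for example by integrating the identified $f'$ from $0$.
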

\begin{proof}
	The equivalence a)$\Leftrightarrow$b) is a classic result from probability theory, see e.g.\ \cite[Thm.~25.3]{Sato}. 

\bigskip
	b)$\Rightarrow$c): If b) holds, it is clear that the integral representation \eqref{dis-e06} extends continuously to all $x\in [-\alpha,\infty)$. Moreover, we can use the differentiation lemma for parameter-dependent integrals to see that all derivatives exist and satisfy $(-1)^{k+1}f^{(k)}\geq 0$. Notice, that $f|_{[\alpha,0)}\leq 0$ while $f|_{(0,\infty)}\geq 0$. This proves c).

\bigskip	
	c)$\Rightarrow$b): If c) holds, the function $g(x) := f(x-\alpha)-f(-\alpha)$ is a Bernstein function with $g(0)=0$, and as such it has a unique representation
	\begin{gather*}
		f(x-\alpha)-f(-\alpha) = \beta_\alpha x + \int_{(0,\infty)} \left(1-e^{-xt}\right)\rho_\alpha(dt),\quad
		\beta_\alpha>0,\; \int_{(0,\infty)} \min\{1,x\}\,\rho_\alpha(dt)<\infty.
	\end{gather*}
	On the other hand, from the standard representation \eqref{dis-e06} of the Bernstein function $f$ we get
	\begin{gather*}
		f(x-\alpha) = \beta x + \int_{(0,\infty)} \left(1-e^{-(x-\alpha)t}\right)\rho(dt),\quad x>\alpha.
	\end{gather*} 
	Comparing these representations, we have
	\begin{gather*}
		\beta_\alpha 
		= \lim_{x\to\infty} \frac{f(x-\alpha)-f(-\alpha)}{x}
		= \lim_{x\to\infty} \frac{f(x-\alpha)}{x}
		= \beta.
	\end{gather*}
	Moreover, from the first formula we get $f(-\alpha)= \int_{(0,\infty)} \left(1-e^{-\alpha t}\right)\rho_\alpha(dt)$, and so
	\begin{gather*}
		\int_{(0,\infty)} \left(e^{-t\alpha}-e^{-tx}\right) \rho_\alpha(dt) = \int_{(0,\infty)} \left(1-e^{-t(x-\alpha)}\right) \rho(dt),
		\quad x>\alpha.
	\end{gather*}
	Because of the uniqueness of the representing measure $\rho$, we conclude that $e^{t\alpha}\,\rho(dt) = \rho_\alpha(dt)$, i.e.\ $\int_{(1,\infty)} e^{t\alpha}\,\rho(dt) = \rho_\alpha(1,\infty)<\infty$. This proves b).

\bigskip	
	b)$\Rightarrow$d): If b) holds, it is easy to see that the representation \eqref{dis-e06} extends to all $\zeta = \lambda + i\mu$ with $\lambda > -\alpha$ and $\mu\in\real$. Just observe that
	\begin{gather*}
		\left|1-e^{-t(\lambda+i\mu)}\right|
		\leq t\sqrt{\lambda^2+\mu^2}\I_{(0,1)}(t) + \left(1+e^{\alpha}\right)\I_{[1,\infty)}(t).
	\end{gather*}
	This extension is analytic on the strip $(-\alpha,\infty) + i(-1,1)$ and continuous up to the boundary, hence $f|_{(\alpha,\infty)}$ is real-analytic and $f(\alpha+)$ exists. This proves d).

\bigskip
	d)$\Rightarrow$c): Finally, assume that d) holds. For some interval $(-\delta,\delta)$ the power series $f(x) = \sum_{k=0}^\infty \frac 1{k!} f^{(k)}(0) x^k$ converges, and we get for the derivatives
	\begin{gather*}
		(-1)^{m+1}f^{(m)}(x) 
		= \sum_{k=m}^\infty \frac{f^{(k)}(0)}{k!} \frac{k!}{m!} (-1)^{-m-1}x^{k-m}
		= \sum_{k=m}^\infty \frac{(-1)^{k+1}f^{(k)}(0)}{m!} (-x)^{k-m}.
	\end{gather*}
	This expression is positive for all $x\in (-\delta,0)$, since $(-1)^{k+1} f^{(k)}(0)\geq 0$. Using a standard chaining argument shows that every compact interval $[-\alpha + l^{-1},0]$, $l\in\nat$, can be covered by finitely many intervals of the type $(x_i-\delta_i, x_i+\delta_i)$ such that $(-1)^{m+1}f^{(m)}(x_i)\geq 0$. From the Taylor series around $x_i$ it follows that $(-1)^{m+1}f^{(m)}(x)\geq 0$  for all $x\in (x_i-\delta_i,x_i)$. Thus, $(-1)^{m+1} f^{(m)}(x)\geq 0$ holds for all $x\in [-\alpha+l^{-1},0)$, hence for all $x\in (-\alpha,0)$, and c) follows.
\end{proof}

We will now prove the UCP for a class of discrete L\'evy operators.
\begin{theorem}
	Let $(T_t)_{t\geq 0}$ be a subordinator with Bernstein function $f$ such that $(T_t)_{t\geq 0}$ has no strictly positive exponential moments, i.e.\ $\Ee e^{\alpha T_t}=\infty$ for any $\alpha>0$. Let $(R_m)_{m\in\nat}$ be a random walk on $\integer^n$ and denote by $(R_{N_{T_t}})_{t\geq 0}$ the L\'evy process with characteristic exponent $\psi(\xi) = f(\psi_{R}(\xi))$ and generator $-\psi(D) = -f(-A_R)$. Let $h>1$ and $(u_k)_{k\in \integer^n}$ be a sequence such that
	\begin{gather*}
		\forall k\in\integer^n\setminus B_h(0)\::\:
		(\psi(D)u)_k = u_k = 0.
	\end{gather*}
	Then $u_k=0$ for all $k\in\integer^n$.
\end{theorem}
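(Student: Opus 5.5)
The plan is to pass to Fourier series on the torus $\mathbb{T}^n=[-\pi,\pi)^n$ and turn the hypothesis into an identity between trigonometric polynomials and $f\circ\psi_R$. We then show that this identity forces $f$ to continue real-analytically to the left of $0$, which Lemma~\ref{dis-03} rules out.

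First, suppose $u\neq 0$. By hypothesis $u_k=0$ for $|k|\geq h$, so $\hat u(\xi):=\sum_k u_k e^{ik\xi}$ is a non-trivial trigonometric polynomial. Since $-\psi(D)$ is a Fourier multiplier with symbol $-f(\psi_R)$, the sequence $\psi(D)u$ has Fourier series $f(\psi_R(\xi))\,\hat u(\xi)$. By hypothesis its coefficients also vanish for $|k|\geq h$, so this Fourier series is a trigonometric polynomial $P(\xi)$. Hence
\begin{gather*}
	f(\psi_R(\xi))\,\hat u(\xi) = P(\xi),\quad \xi\in\mathbb{T}^n .
\end{gather*}
Both $\hat u$ and $P$ extend to entire functions on $\complex^n$. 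I will assume, or first reduce to the case, that the random walk has finite range or at least exponentially decaying $p_\ell$. Then $\psi_R(\xi)=\sum_\ell p_\ell(1-e^{i\ell\xi})$ is holomorphic in a complex neighbourhood $\{|\Imm\xi|<\delta\}$ of $\real^n$. This is the point where the structure of the step distribution enters.

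Second, I will pick a complex line on which $\psi_R$ takes both positive and negative real values. Choose a direction $e$ and a real base point $\xi_0$ with $\hat u(\xi_0)\neq 0$, close to $0$. Consider the one-variable holomorphic function $g(z):=\psi_R(\xi_0+ze)$ for $z$ in a small disc. Since $\hat u$ is a non-trivial trigonometric polynomial, it has no zeros in a small disc around $\xi_0$ (after a small perturbation of $\xi_0$). There $f(g(z)) = P(\xi_0+ze)/\hat u(\xi_0+ze)=:M(z)$ is holomorphic, first for real $z$ and then, by the identity, as the continuation.

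The intended conclusion is that $f(w)=M(g^{-1}(w))$ on the set $g(\text{disc})\cap(0,\infty)$. Since $g$ is a non-constant holomorphic map, it is open, and this relation continues $f$ holomorphically along a path from $(0,\infty)$ into a neighbourhood of a point, and then onto an interval $(-\alpha,0]$. Near the origin this path must go around $w=0$, where $g$ may have a critical point; this happens for symmetric walks, where $\psi_R\approx\frac12\langle\Sigma\xi,\xi\rangle$. The branches of $g^{-1}$ then produce a possible algebraic branch point of $f$ at $0$. I would handle this by using that $f$ is bounded near $0$ with $f(0+)=0$, and that $f\circ g$ is single-valued. In a local coordinate $w=\zeta^m$ this makes $f(\zeta^m)$ single-valued and holomorphic, and I would try to show it is in fact a power series in $\zeta^m$, so that $f$ is analytic at $0$.

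Third, once $f$ has a real-analytic extension to some $(-\alpha,\infty)$ with $\alpha>0$ that is continuous up to $-\alpha$, Lemma~\ref{dis-03} (d)$\Rightarrow$(a) gives $\Ee e^{\alpha T_t}<\infty$. This contradicts the assumption that $(T_t)_{t\geq 0}$ has no positive exponential moments, so $u=0$. If the continuation only reaches a small neighbourhood of $0$, I will simply shrink $\alpha$.

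The main obstacle is the second step: making the local continuation of $f$ across $0$ rigorous. This requires controlling the critical point of $\psi_R$ at the origin (branching of $g^{-1}$) and possible zeros of $\hat u$ on the relevant complex curve. What I would try first is to work on the imaginary axis $\xi=i\eta e$, where $\psi_R(i\eta e)=\sum_\ell p_\ell(1-e^{-\eta\,\ell\cdot e})$ is real. For $\eta$ small, of a suitable sign, and $e$ chosen so that the walk has steps with $\ell\cdot e\neq 0$, this quantity is negative. Reading off $f$ on $(-\alpha,0)$ as $P(i\eta e)/\hat u(i\eta e)$ then needs only that $\hat u(i\eta e)$ has isolated zeros, which holds since $\hat u\not\equiv 0$ is entire. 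Removable singularities of the bounded quotient then close the argument.
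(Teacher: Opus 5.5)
Your concrete route in the last paragraph is exactly the paper's. You pass to Fourier series, restrict the quotient $M(t)=P(te)/\hat u(te)$ of trigonometric polynomials to a line, evaluate at $t=i\eta$ where $\psi_R(i\eta e)=\sum_\ell p_\ell(1-e^{-\eta\,\ell\cdot e})<0$, and contradict Lemma~\ref{dis-03}. The paper passes over the decisive point by simply writing $\psi(\pm i\delta z_0)=f(\psi_R(\pm i\delta z_0))$. You correctly identify that point as the obstacle, but you do not resolve it, and your last paragraph does not close it either.

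Reading off $w=\psi_R(i\eta e)\mapsto M(i\eta)$ and removing singularities of $M$ only yields \emph{some} real-analytic function on $(-\alpha,0)$. Lemma~\ref{dis-03}\,d) needs this function to glue with $f$ into a real-analytic function on $(-\alpha,\infty)$, i.e.\ to be the continuation of $f$ across $0$. That is precisely the branching problem at the critical point of $g(t):=\psi_R(te)$. Boundedness of $f$ with $f(0+)=0$ does not settle it: $f(w)=\sqrt{w}$ is bounded, vanishes at $0$, and $\zeta\mapsto f(\zeta^2)$ is single-valued and holomorphic on each of the sectors $\{|\arg\zeta|<\pi/4\}$ and $\{|\arg\zeta-\pi|<\pi/4\}$, yet $f$ is branched.

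The missing idea is to use both real half-lines. $M$ is holomorphic on a disc around $t=0$, since it is bounded on the real axis there. Because $\Ree g(t)=\sum_\ell p_\ell(1-\cos(t\,\ell\cdot e))>0$ for small real $t\neq0$, the identity $M=f\circ g$ holds for $t>0$ \emph{and} for $t<0$, with the same single-valued $f$ on $\{\Ree w>0\}$. Since $g''(0)=\sum_\ell p_\ell(\ell\cdot e)^2>0$, $g$ vanishes at $0$ to order $1$ or $2$.

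In the first case $g$ is locally biholomorphic, and $M\circ g^{-1}$ agrees with $f$ on a half-disc, hence continues it. In the second case write $g=\gamma^2$ with $\gamma$ locally biholomorphic and set $H:=M\circ\gamma^{-1}$. Then $H(\zeta)=f(\zeta^2)$ on $\gamma((0,r))$ and on $\gamma((-r,0))$, so by the identity theorem on both sectors above. Hence $H(-\zeta)=H(\zeta)$, so $H(\zeta)=\tilde H(\zeta^2)$, and $\tilde H$ is a holomorphic continuation of $f$ to a disc around $0$, real on $(-\alpha,0)$ by reflection. Only at this point does d)$\Rightarrow$a) of Lemma~\ref{dis-03} give the contradiction.

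Two smaller points:
\begin{itemize}
\item $\hat u\not\equiv0$ does not imply $t\mapsto\hat u(te)\not\equiv0$; for example $e^{i\xi_1}-1$ vanishes on $\complex e_2$. So $e$ must be chosen generically, as the paper does with $z_0$.
\item The exponential-moment (or finite-range) assumption on the steps is also used implicitly by the paper. State it as a hypothesis rather than announcing a reduction you do not supply.
\end{itemize}
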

Theorem applies, in particular, to fractional powers of the discrete Laplacian: Take the symmetric, simple random walk $R$ on $\integer^n$ and observe that $f(x)=x^s$, $s\in (0,1)$, cannot be extended beyond $x=0$ as a real-analytic function, cf.\ Lemma~\ref{dis-03}
\begin{proof}
	The proof follows closely the idea of the proof of \cite[Theorem 3]{ber-et-al}. Assume that $u\neq 0$. Let $\left(S_{N_t}\right)_{t\geq 0}$ be any non-degenerate random walk on a lattice $L\subset\real$ embedded in continuous time, and $\alpha\in \real\setminus\{0\}$. From \eqref{dis-e01} we know that
	\begin{gather}\label{dis-e10}\begin{aligned}
		\psi_{S}(-i\alpha)+\psi_{S}(i\alpha)
		&= \sum_{\ell\in L}p_\ell\left(1-e^{\alpha \ell}\right) + \sum_{\ell\in L}p_\ell\left(1-e^{-\alpha \ell}\right)\\
		&= \sum_{\ell\in L}p_\ell\left(2-e^{\alpha \ell}+e^{-\alpha \ell}\right) < 0,
	\end{aligned}\end{gather}
	i.e.\ $\psi_{S}(-i\alpha)<0$ or  $\psi_{S}(i\alpha)<0$. Since $R$ is non-degenerate, there is some $z_0\in\complex^n$ such that $S_m := z_0\cdot R_m$ is a non-degenerate random walk on $L = z_0\cdot\integer^n\subset\real$.
	
	Using Tonelli's theorem, we get
	\begin{align*}
		\Ee \left(e^{\alpha z_0\cdot R_{N_{T_t}}} + e^{-\alpha z_0 R_{N_{T_t}}}\right)
		&= \int_{(0,\infty)} \Ee \left(e^{\alpha z_0\cdot R_{N_{r}}} + e^{-\alpha z_0\cdot R_{N_{r}}}\right) \Pp(T_t\in dr)\\
		&= \int_{(0,\infty)} \left(e^{-r\psi_S(-i\alpha)} + e^{-r\psi_S(i\alpha)}\right) \Pp(T_t\in dr) = \infty,
	\end{align*}
	since $T_t$ has no strictly positive exponential moments.
	
	Now take the Fourier transform (Fourier series) of $u$ and $\psi(D)u$, which we denote by $\Fcal u$ and $\Fcal(\psi(D)u)$. We know that
	\begin{align*}
		\Fcal(\psi(D)u)(z) = \psi(z)\Fcal u(z).
	\end{align*}
	As $u$ and $\psi(D)u$ are compactly supported, we conclude that $\Fcal u$ and $\Fcal(\psi(D)u)$ are entire functions on $\complex^n$. 
	
	There has to exist a directional vector $z_0\in\complex^n$ such that $\Fcal u(\cdot z_0):\complex\to\complex$ and $\psi(\cdot z_0):\complex\to\complex$ are holomorphic and not identically $0$, as $\psi(\cdot z_0)$ can only be equal to $0$ in a $n-1$ dimensional vector space, otherwise $(R_{N_{T_t}})_{t\geq 0}$ would be degenerate, see the proof of \cite[Lemma~4.3]{Berger-Schilling}. 
	
	So we see that
	\begin{align*}
		t\mapsto\psi(t z_0) = \frac{\Fcal(\psi(D)u)(tz_0)}{\Fcal u(tz_0)},
	\end{align*} 
	is meromorphic; in particular $\psi(\pm i\delta z_0)$ exists for some $\delta>0$. In view of \eqref{dis-e01}, see also the related calculation \eqref{dis-e10}, we get $\psi_{R}(i\delta z_0)+\psi_{R}(-i\delta z_0)<0$.
	
	Using the fact that $T_t$ has no positive exponential moments, we know from Lemma~\ref{dis-03} that $f$ cannot be extended into the left half-axis. Since $\psi(\pm i\delta z_0) = f(\psi_{R}(\pm i\delta z_0))$ we get a contradiction, unless $u=0$.
\end{proof}
The next theorem shows that the UCP property does not hold for bounded sets:
\begin{theorem}
	Let $(T_t)_{t\geq 0}$ be a subordinator with Bernstein function $f$ and $(R_m)_{m\in\nat}$ be a random walk on $\integer^n$. Denote by $(R_{N_{T_t}})_{t\geq 0}$ the L\'evy process with characteristic exponent $\psi(\xi) = f(\psi_{R}(\xi))$ and generator $-\psi(D) = -f(-A_R)$. If the support of the L\'evy measure $\nu$ of $R_{N_{T_t}}$ has infinitely many points, then for every $h>0$ there always exist a sequence $(u_k)_{k\in \integer}\neq 0$ such that
	\begin{gather*} 
		\forall k\in B_h(0)\::\:
		u_k=0\quad\text{and}\quad
		(\psi(D)u)_k.
	\end{gather*}
\end{theorem}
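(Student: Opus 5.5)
The statement is garbled ("$(\psi(D)u)_k$" lacks "$=0$"). I read it as: for every $h>0$ there is a nonzero sequence $u$ on $\integer^n$ with $u_k=0$ and $(\psi(D)u)_k=0$ for all $k\in B_h(0)$. This is the discrete counterpart of Theorem~\ref{ucpl-07} and Examples~\ref{ucpl-11}--\ref{ucpl-17}. The point is that the ball $B_h(0)\cap\integer^n$ is a \emph{finite} set, so there are only finitely many linear constraints. Meanwhile the infinitely many points in $\supp\nu$ supply infinitely many degrees of freedom, so a finite-dimensional linear algebra argument should suffice.

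\emph{Step 1 (reduce to linear equations).} Write $K:=B_h(0)\cap\integer^n$, which is finite, say $|K|=N$. The generator acts on sequences as
\[
-(\psi(D)u)_k=\sum_{\ell\neq 0}\nu(\{\ell\})\,(u_{k+\ell}-u_k).
\]
Here $\nu=\sum_\ell \nu(\{\ell\})\delta_\ell$ is the L\'evy measure of the subordinated walk, with $\nu(\{\ell\})=\int_{(0,\infty)}\Pp(R_{N_r}=\ell)\,\rho(dr)$ plus a term $\beta p_\ell$. We look for $u$ with finite support in $\integer^n\setminus K$. For such $u$ and $k\in K$ the term $u_k$ vanishes, so the condition becomes
\[
\sum_{j\notin K}\nu(\{j-k\})\,u_j=0,\qquad k\in K.
\]
This is a system of $N$ homogeneous linear equations in the unknowns $u_j$.

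\emph{Step 2 (enough unknowns).} Since $\supp\nu$ is infinite and $K$ is finite, we can choose $N+1$ distinct points $j_0,\dots,j_N\in\integer^n\setminus K$, and we set $u_j=0$ for all other $j$. The resulting system is $N$ equations in $N+1$ unknowns, so it has a nontrivial solution. This gives a nonzero finitely supported $u$, which lies in every $\ell^p$ and in the domain of the operator, with $u|_K=0$ and $(\psi(D)u)|_K=0$.

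Strictly, Step 2 does not even need $\supp\nu$ to be infinite: any $N+1$ sites outside $K$ would do. The hypothesis presumably serves to make the statement non-vacuous, so that the coefficients are nonzero and the equations are genuine constraints rather than trivially satisfied. I would note this in the write-up and, for robustness, choose the $j_i$ in $K+\supp\nu$ so that the solution actually interacts with $K$.

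\emph{Main obstacle.} The only point needing care is well-definedness. We must check that $\nu$ is a finite measure on $\integer^n\setminus\{0\}$, or at least that the sums converge, so that $\psi(D)u$ makes sense pointwise. For finitely supported $u$ this is immediate, because the sum over $\ell$ reduces to finitely many terms $u_{k+\ell}$ plus $u_k\,\nu(\integer^n\setminus\{0\})$, and $u_k=0$ on $K$. Convergence of the subordination integral defining $\nu(\{\ell\})$ follows from $\Pp(R_{N_r}=\ell)\le 1-e^{-r}\le\min\{1,r\}$ for $\ell\neq0$.
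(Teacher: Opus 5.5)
Your reading of the garbled statement is the intended one, and your argument is correct. It rests on the same idea as the paper's proof: the conditions $(\psi(D)u)_k=0$ for $k\in K:=B_h(0)\cap\integer^n$ form a homogeneous linear system with fewer equations than unknowns.

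The difference is in how the unknowns are chosen.
\begin{itemize}
\item The paper takes as unknowns all the infinitely many $u_j$ with $j\in K+\supp\nu$. This is exactly where it uses $\#\supp\nu=\infty$. It leaves implicit in which space the resulting infinite sums converge, and how the infinite underdetermined system is actually solved.
\item You restrict to $u$ supported on $N+1$ sites outside $K$. This turns the argument into elementary finite-dimensional linear algebra and produces an explicit finitely supported solution. That solution lies in every $\ell^p$, and it lies in the domain because $\nu(\integer^n\setminus\{0\})\le\beta+\int_{(0,\infty)}(1-e^{-r})\,\rho(dr)=f(1)<\infty$. This total-mass bound is slightly stronger than the atom-wise bound you gave, and it is the one needed for $\psi(D)u$ to be defined off $K$.
\end{itemize}

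Your version also shows, correctly, that the hypothesis $\#\supp\nu=\infty$ is superfluous. Failure of unique continuation from bounded sets is pure dimension counting and holds for every subordinated walk of this type. In the paper's argument, the hypothesis at most serves to make the constraints couple to infinitely many unknowns, which is also how you sensibly suggest using it.
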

\begin{proof}
	The equation
	\begin{align*}
		(\psi(D)u)_k = 0\text{\ \ for\ \ }k\in B_h(0)
	\end{align*}
	can be seen as an undetermined systems of linear equations, as it depends on infinitely many $u_k$'s by our assumption that $\#\, \supp(\nu)=\infty$. We can now solve this system of equations under the condition that $u_k=0$ for $k\in B_h(0)$.
\end{proof}

\end{document}